%% file: main.tex
\documentclass[letterpaper]{article}
\usepackage[preprint]{aaai2027}
\usepackage[hyphens]{url}
\usepackage{graphicx}
\usepackage{natbib}
\usepackage{booktabs}
\usepackage{amsmath}
\usepackage{amsfonts}
\usepackage{amssymb}
\usepackage{mathtools}
\usepackage{amsthm}
\usepackage{algorithm}
\usepackage{algorithmic}

\theoremstyle{plain}
\newtheorem{theorem}{Theorem}[section]
\newtheorem{proposition}[theorem]{Proposition}
\newtheorem{lemma}[theorem]{Lemma}

\theoremstyle{definition}

\newtheorem{assumption}[theorem]{Assumption}
\theoremstyle{remark}
\newtheorem{remark}[theorem]{Remark}

\newcommand{\E}{\mathbb{E}}
\newcommand{\R}{\mathbb{R}}
\newcommand{\diff}{\mathrm{d}}
\newcommand{\abs}[1]{\left|#1\right|}

\newcommand{\Indc}[1]{\mathbf{1}_{\left\{#1\right\}}}

\newcommand{\bfu}{\mathbf{u}}
\newcommand{\bfx}{\mathbf{x}}
\newcommand{\bfy}{\mathbf{y}}
\newcommand{\bfz}{\mathbf{z}}
\newcommand{\bdtau}{\boldsymbol{\tau}}
\newcommand{\bdv}{\boldsymbol{v}}

\title{Diffusion Quasi-Monte Carlo}
\author{Jianlong Chen\equalcontrib, Yifeng Yu\equalcontrib\corresponding}
\affiliations{Department of Mathematical Sciences, Tsinghua University, Beijing 100084, China\\
\texttt{chen-jl22@mails.tsinghua.edu.cn, yyf22@mails.tsinghua.edu.cn}}

\begin{document}

\maketitle

\begin{abstract}
We study high-dimensional numerical integration with respect to complex target measures using diffusion-based transport maps and randomized quasi-Monte Carlo (RQMC). Score-based diffusion models induce a deterministic \emph{probability flow ODE} that transports a simple prior to the target, suggesting a principled way to transform low-discrepancy points on the unit cube into informative samples.
We construct a cube-to-target map by composing a Gaussian base transformation (the component-wise inverse Gaussian CDF, $\Phi^{-1}$) with an Euler-discretized probability flow ODE. To retain unbiasedness under transport approximation, we formulate integration as importance sampling (IS) on the cube.

Our main result provides verifiable conditions under which the resulting IS integrand satisfies the boundary growth condition, implying an $O(N^{-1+\varepsilon})$ RMSE for scrambled nets. We then establish these conditions for diffusion probability-flow transport under mild bounded-derivative assumptions on the learned vector field, explicitly controlling the boundary singularities introduced by $\Phi^{-1}$. Experiments range from a 2D mixture to 784D images and a 40,960D conditional vorticity-assimilation task; in the latter, blocked scrambled Sobol' sampling reduces the randomization standard deviation of nonlinear accuracy metrics at essentially unchanged online denoising cost. Together, these results give a theoretical and empirical foundation for combining diffusion generative modeling with high-precision RQMC integration.
\end{abstract}

\section{Introduction}

High-dimensional numerical integration is ubiquitous in Bayesian inference, uncertainty quantification, computational physics, and scientific machine learning. A prototypical task is to compute
\[
	\mu \;=\; \E_{\bfx\sim \pi}[f(\bfx)]
	\;=\; \int_{\R^d} f(\bfx)\,\pi(\bfx)\,\diff\bfx,
\]
where $\pi$ is a target density on $\R^d$ and $f$ is an integrand of interest. Standard Monte Carlo (MC) achieves an $O(N^{-1/2})$ rate in root-mean-square error (RMSE), which is often insufficient for high-precision applications.

Quasi-Monte Carlo (QMC) and randomized QMC (RQMC) replace i.i.d.\ sampling on $[0,1]^d$ with low-discrepancy point sets and randomizations such as Owen scrambling, and can substantially reduce variance when the induced integrand on the cube has sufficient mixed regularity \cite{nied:1992,dick:pill:2010,dick:kuo:sloa:2013,Owen2006}. Under suitable conditions, scrambled-net RQMC can approach an $O(N^{-1+\varepsilon})$ RMSE for any $\varepsilon>0$ \cite{Owen2006}. However, applying QMC in non-uniform target spaces requires careful measure transformation: one typically constructs a \emph{transport map} $\bdtau:(0,1)^d\to\R^d$ and rewrites the integral as an expectation over the cube \cite{marzouk2016measuretransport}.

Classical constructions (e.g.\ inverse-CDF and Rosenblatt-type transforms) can be effective in low dimensions, but become difficult to design and differentiate in high dimensions. Even when a high-quality transport is available, its interaction with QMC error analysis is subtle: the map may introduce boundary singularities (through inverse-CDF factors) and may change the effective variation of the cube integrand. Modern RQMC theory therefore often couples transformation design with explicit growth/variation control; see, for example, transformed RQMC analyses over general product spaces and variation-preserving transformations \cite{basu:owen:2015,basu:owen:2016}. Related strands include sequential QMC for sampling-based inference \cite{gerb:chop:2015} and probabilistic perspectives on numerical integration \cite{briol_probabilistic_2015}.

Deep generative models provide data-driven transport constructions in high dimensions. In particular, diffusion probabilistic models and score-based generative models \cite{song2020score} (as well as closely related continuous-time flow formulations such as rectified/straight flows \cite{liu2022flow}) admit deterministic dynamics that map a tractable prior to a complex target. The diffusion framework includes a deterministic probability flow ODE whose solution defines a continuous-time transport from a simple prior to the data distribution. This makes diffusion models natural candidates for defining $\bdtau$ for QMC/RQMC.

The remaining question is theoretical: \emph{does the diffusion-induced transport preserve the boundary regularity required by scrambled-net RQMC theory}? The key obstacle is that the standard Gaussian base transformation from $[0,1]^d$ uses $\Phi^{-1}$, whose derivatives blow up near $0$ and $1$. If left untreated, these boundary singularities can invalidate the derivative-based boundary growth conditions that underlie high-order RMSE bounds \cite{Owen2006}. This paper resolves the issue by adopting an importance sampling view and by verifying transport growth conditions that imply Owen-type boundary growth for the IS integrand on the cube.

\paragraph{Contributions.}
We structure our analysis by first developing a general theory, followed by a rigorous verification for diffusion-based transports. Our main contributions are:
\begin{enumerate}
	\item We develop a convergence analysis for IS-RQMC with a general diffeomorphic transport $\bdtau$, giving sufficient growth conditions on $f$, $\pi$, and $\bdtau$ such that the induced cube integrand satisfies Owen's boundary growth condition and yields an $O(N^{-1+\varepsilon})$ RMSE for scrambled nets \cite{Owen2006}.
	\item We construct a diffusion transport map by composing a Gaussian base transformation with an Euler-discretized probability flow ODE induced by a score-based diffusion model \cite{song2020score}.
	\item We prove that this diffusion transport satisfies the required growth conditions under bounded-derivative assumptions on the learned vector field (equivalently, on the score model and diffusion coefficients), carefully controlling the extra logarithmic factors from $\Phi^{-1}$.
	\item We evaluate diffusion RQMC from 2 to 40,960 dimensions, including conditional super-resolution data assimilation, and separate asymptotic rate gains from practically important constant-factor and randomization-stability gains.
\end{enumerate}

\paragraph{Organization.}
The paper is organized as follows. Section~\ref{sec:background} establishes the preliminaries on QMC, RQMC, boundary growth conditions, importance sampling, and diffusion probability flow ODEs. Section~\ref{sec:transport} details the construction of the diffusion transport and formulates the resulting IS-RQMC estimators. Section~\ref{sec:convergence} presents the general convergence theory for arbitrary transport maps, while Section~\ref{sec:diffusion_growth} verifies the specific growth conditions for our diffusion framework. Finally, Section~\ref{sec:numerics} provides the experimental results, and Section~\ref{sec:conclusions} concludes the work.

\section{Background}
\label{sec:background}

\subsection{Quasi-Monte Carlo and randomized quasi-Monte Carlo}

We recall the basic RQMC setting. For an integrand $h:[0,1]^d\to \R$,
\[
	I(h) = \int_{[0,1]^d} h(\bfu)\,\diff\bfu,
\qquad
	\widehat I_N(h) = \frac{1}{N}\sum_{i=1}^N h(\bfu_i),
\]
where $\{\bfu_i\}_{i=1}^N$ is a low-discrepancy design. Randomization (e.g.\ Owen scrambling) produces randomized points $\{\bfu'_i\}$ which are each marginally uniform, allowing unbiasedness and variance estimation, while retaining low discrepancy almost surely \cite{Owen2006}. We refer to \cite{nied:1992,dick:pill:2010,dick:kuo:sloa:2013} for background.

\subsection{Boundary growth condition}
\label{subsec:GrowthCondition}

We use the standard set-index derivative notation (as in Owen's theory). Let $1\!:d=\{1,\dots,d\}$. For $v\subseteq 1\!:d$, define
\[
	\partial^v := \prod_{j\in v}\frac{\partial}{\partial u_j}.
\]

\begin{assumption}[Boundary growth condition \cite{Owen2006}]
\label{assump:growth}
\begin{equation}\label{eq:owen_growth_form}
	\abs{\partial^v h(\bfu)}
	\;\le\;
	C \prod_{j=1}^d \bigl[\min(u_j,1-u_j)\bigr]^{-B-\Indc{j\in v}}.
\end{equation}
\end{assumption}

\begin{theorem}[Scrambled-net RMSE; adapted from \cite{Owen2006}]
\label{thm:owen_rate}
If $h$ satisfies Assumption~\ref{assump:growth}, then the scrambled net estimator for $I(h)$ achieves RMSE $O(N^{-1+\varepsilon})$ for any $\varepsilon>0$.
\end{theorem}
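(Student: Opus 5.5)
The plan is to reproduce Owen's (2006) argument: truncate the singular integrand near the boundary, apply a Koksma--Hlawka-type bound to the truncated part, and control the removed part with the growth condition. The parameter $B$ is implicitly assumed small ($B<1/2$ in Owen's setting, and we impose this for the RMSE claim). That keeps $h$ square integrable, and it keeps the second moment of the remainder at the right order.

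\emph{Step 1 (truncation).} For $\eta\in(0,1/2)$, set $K_\eta=\{\bfu:\min(u_j,1-u_j)\ge\eta\ \forall j\}$. Using Owen's low-variation extension, we construct $h_\eta$ on $[0,1]^d$ with $h_\eta=h$ on $K_\eta$. For each $j\notin v$ the extension freezes the $j$-th coordinate at the boundary of $K_\eta$. Its Hardy--Krause variation then satisfies
\[
V_{HK}(h_\eta)\le\sum_{v\neq\emptyset}\int_{K_\eta}\abs{\partial^v h(\bfu)}\,\diff\bfu \le C\,\eta^{-dB}\,\bigl(\log(1/\eta)\bigr)^{d},
\]
which is obtained by integrating the bound of Assumption~\ref{assump:growth}. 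Each differentiated coordinate contributes $\int_\eta^{1/2}u^{-B-1}\diff u=O(\eta^{-B})$. Each undifferentiated coordinate is evaluated at distance at least $\eta$, so it contributes $O(\eta^{-B})$ as well.

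\emph{Step 2 (error decomposition).} Write $h=h_\eta+(h-h_\eta)$.
\begin{itemize}
\item For the first term, Koksma--Hlawka applies to the scrambled net, which still has star discrepancy $D_N^*=O(N^{-1}(\log N)^{d-1})$ almost surely. This gives the bound $D_N^*V_{HK}(h_\eta)$.
\item For the second term, $h-h_\eta$ vanishes on $K_\eta$ and satisfies $\abs{h-h_\eta}\le C\prod_j\min(u_j,1-u_j)^{-B}$ elsewhere. Each scrambled point is uniform, so the RMSE of this part is at most $\bigl(\E\abs{h-h_\eta}^2(\bfu)\bigr)^{1/2}$ plus the integral error. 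Its mean square is $O(\eta\cdot\eta^{-2B})$, because the complement of $K_\eta$ has measure $O(\eta)$ and the weight $u^{-2B}$ is integrable.
\end{itemize}
This is the step I expect to be the main obstacle. A naive Cauchy--Schwarz bound gives only $O(\eta^{1/2-B})$ for the average over $N$ points, and that is not enough.

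To get the sharp rate I would follow Owen's sharper device. Scrambled nets place only $O(N\eta)$ points in boundary slabs of width $\eta$ aligned with elementary intervals, deterministically. This yields an error bound for the remainder of $O(\eta^{1-B})$ up to logarithms, an $L^1$-type bound. In particular, for $\eta\asymp 1/N$, the net points avoid the region of width $\eta$ around the corner except at a controlled number of points.

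\emph{Step 3 (balancing).} Choose $\eta=N^{-1}$ (more precisely $\eta=1/(bN)$ for base $b$). The total error is then
\[
O\bigl(N^{-1}(\log N)^{d-1}N^{dB}(\log N)^d\bigr)+O(N^{-1+B}).
\]
Since $B$ can be taken arbitrarily small, equivalently since Theorem~\ref{thm:owen_rate} is invoked with the growth exponent absorbed into $\varepsilon$, every such term is $O(N^{-1+\varepsilon})$. Specifically, given $\varepsilon>0$, it suffices that $dB<\varepsilon$ after absorbing the logarithms. Taking root mean squares over the randomization preserves this order, which yields the stated RMSE $O(N^{-1+\varepsilon})$.
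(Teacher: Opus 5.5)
The paper gives no proof of this theorem; it only cites Owen (2006). Your proposal therefore has to stand as a reconstruction of Owen's truncation argument. The architecture is right: you use the clamped extension $h_\eta$ on $K_\eta$, apply Koksma--Hlawka with $D_N^*=O(N^{-1}(\log N)^{d-1})$ for the (almost surely still) $(t,m,d)$-net, and control $g:=h-h_\eta$ separately. But the step you yourself flag is not closed. What you extract from the net structure is, by your own label, an $L^1$ bound. Such a bound, $\E\abs{\frac1N\sum_i g(\bfu_i)}\le\int\abs{g}=O(\eta^{1-B})$, needs only uniform marginals and no net structure at all. The sentence ``taking root mean squares over the randomization preserves this order'' is then unjustified. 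A first-moment bound on the remainder says nothing about its second moment, and the second moment is exactly where your naive estimate lost the factor.

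The missing ingredient is to feed the deterministic point count into an $L^2$ estimate. Suppose $\eta\le b^{t-m}$. Each of the $2d$ boundary slabs then lies inside an elementary interval of volume $b^{t-m}$, which the net hits exactly $b^t$ times. So at most $M=2db^t$ points fall outside $K_\eta$. Note that the count is not $O(N\eta)$ for your choice $\eta=1/(bN)$, which is below the net's resolution, but it is still $O(1)$. Since $g=0$ on $K_\eta$, Cauchy--Schwarz over these at most $M$ nonzero terms gives, pointwise, $\bigl(\frac1N\sum_i g(\bfu_i)\bigr)^2\le\frac{M}{N^2}\sum_i g(\bfu_i)^2$. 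Taking expectations yields $\frac{M}{N}\int g^2=O(N^{-1}\eta^{1-2B})$, so the remainder has RMSE $O(N^{-1+B})$ at $\eta=b^{t-m}$.

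Your claim that the naive bound ``is not enough'' holds only because you fixed $\eta\asymp 1/N$ in advance. The growth condition is available for arbitrarily small $B$ (with $B<1/2$), so you can instead take $\eta=N^{-2}$. Minkowski's inequality and uniform marginals then bound the remainder's RMSE by $\|g\|_{L^2}+\int\abs{g}=O(\eta^{1/2-B})=O(N^{-1+2B})$. Koksma--Hlawka gives $O(N^{-1+2dB}(\log N)^{d-1})$ for the truncated part. Choosing $2dB<\varepsilon$ finishes the proof with no net-counting at all; the sharp device only improves the exponent for a fixed $B$.

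Two smaller points:
\begin{itemize}
\item Your $\log(1/\eta)$ factors are spurious for $B>0$, since $\int_\eta^{1/2}u^{-1-B}\,\diff u\le\eta^{-B}/B$.
\item Your displayed variation bound integrates $\abs{\partial^v h}$ over all of $K_\eta$. The correct expression integrates over $|v|$-dimensional faces with the remaining coordinates frozen (at $1-\eta$ for the clamped extension). Your prose in fact uses the correct version.
\end{itemize}
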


\subsection{Transport map and importance sampling}
\label{subsec:transport_is}

We want to compute $\mu=\E_{\bfx\sim \pi}[f(\bfx)]$ for a target density $\pi$ on $\R^d$. Let $\bdtau:(0,1)^d\to\R^d$ be a $C^1$ diffeomorphism \cite{marzouk2016measuretransport} and denote the induced proposal density $q_{\bdtau}$ by change of variables:
\begin{equation}\label{eq:cov_formula}
	\begin{aligned}
		q_{\bdtau}(\bfx)
		&= \abs{\det J_{\bdtau}(\bdtau^{-1}(\bfx))}^{-1},\\
		J_{\bdtau}(\bfu)
		&= \left(\frac{\partial \tau_i}{\partial u_j}\right)_{i,j}.
	\end{aligned}
\end{equation}
Even if $q_{\bdtau}\neq \pi$, we may use importance sampling:
\[
	\begin{aligned}
		\mu
		&=\E_{\bfy\sim q_{\bdtau}}\!\left[f(\bfy)\frac{\pi(\bfy)}{q_{\bdtau}(\bfy)}\right]\\
		&=\E_{\bfu\sim \mathrm{Unif}((0,1)^d)}\!\left[
			(f\circ\bdtau)(\bfu)\,(\pi\circ\bdtau)(\bfu)\,\abs{\det J_{\bdtau}(\bfu)}
		\right].
	\end{aligned}
\]
\begin{equation}\label{eq:h_def}
	h(\bfu)
	\;:=\;
	(f\circ\bdtau)(\bfu)\cdot(\pi\circ\bdtau)(\bfu)\cdot \abs{\det J_{\bdtau}(\bfu)}.
\end{equation}
Then $\mu=I(h)$. The IS-RQMC estimator is
\[
	\widehat\mu_N
	\;=\;
	\frac{1}{N}\sum_{i=1}^N h(\bfu_i'),
\]
with scrambled points $\{\bfu_i'\}$, and is unbiased because each $\bfu_i'$ is uniform.

\subsection{Diffusion models and the probability flow ODE}

A diffusion model defines a forward stochastic differential equation (SDE)
\[
	\diff \bfx = \mathbf{f}(\bfx,t)\diff t + g(t)\diff \mathbf{w},
\qquad t\in[0,T],
\]
which transforms data into a tractable prior (often standard Gaussian) as $t\to T$. In this paper we take the terminal distribution $p_T$ to be standard Gaussian on $\R^d$.
This choice is consistent with our sampling interface: if $\bfu\sim\mathrm{Unif}((0,1)^d)$ and $\bfz=G(\bfu)$ with $G_j(u)=\Phi^{-1}(u)$, then $\bfz\sim \mathcal{N}(0,I)$.

Importantly, using a Gaussian prior does not affect the IS-RQMC convergence theory in Sections~\ref{sec:transport}--\ref{sec:convergence}: the theory only requires a well-defined composite map $\bdtau:(0,1)^d\to\R^d$ and bounds on its boundary growth. The prior choice enters only through the base map $G$ (and hence through the derivative/value growth conditions that we verify).
The corresponding score $\nabla_{\bfx}\log p_t(\bfx)$ is approximated by a neural score model $\mathbf{s}_\theta(\bfx,t)$.

The probability flow ODE associated with the diffusion reads
\begin{equation}\label{eq:pf_ode}
	\begin{aligned}
		\frac{\diff\bfx}{\diff t}
		&=
		\mathbf{f}(\bfx,t) - \frac{1}{2}g(t)^2\,\nabla_{\bfx}\log p_t(\bfx)\\
		&\approx
		\mathbf{f}(\bfx,t) - \frac{1}{2}g(t)^2\,\mathbf{s}_\theta(\bfx,t).
	\end{aligned}
\end{equation}
Let us denote the (learned) ODE vector field by
\begin{equation}\label{eq:vtheta_def}
	\bdv_\theta(\bfx,t)
	\;:=\;
	\mathbf{f}(\bfx,t) - \frac{1}{2}g(t)^2\,\mathbf{s}_\theta(\bfx,t).
\end{equation}
Solving the ODE from $t=T$ to $t=0$ maps a prior sample to a sample in data space.

\section{Transport maps for QMC and RQMC with diffusion probability flow}
\label{sec:transport}

We now specialize the transport-map construction to diffusion probability-flow dynamics.

\subsection{Composite transport map}

We construct a map from the cube to $\R^d$ as a composition:
\[
  \bdtau(\bfu)
	\;:=\;
	\widetilde{\bdtau}\bigl(G(\bfu)\bigr),
\qquad
  \bfu\in(0,1)^d.
\]
\begin{enumerate}
	\item $G:(0,1)^d\to\R^d$ maps uniform inputs to a Gaussian prior.
	\item $\tilde{\bdtau}:\R^d\to\R^d$ is the terminal-time flow map generated by the probability flow ODE.
\end{enumerate}

\paragraph{Base map (Gaussian prior).}
We take the component-wise inverse Gaussian CDF map
\begin{equation}\label{eq:probit_map}
	G(\bfu)
	\;=\;
	\bigl(\Phi^{-1}(u_1),\dots,\Phi^{-1}(u_d)\bigr)^\top,
\end{equation}
so that $G(\bfu)\sim \mathcal{N}(0,I)$ when $\bfu\sim \mathrm{Unif}((0,1)^d)$.

\paragraph{ODE flow map.}
Let $\Psi_\theta(\bfx,t)$ be the solution map of
\[
	\partial_t \Psi_\theta(\bfx,t) = \bdv_\theta(\Psi_\theta(\bfx,t),t),
\qquad
	\Psi_\theta(\bfx,T)=\bfx.
\]
Define the terminal map $\tilde{\bdtau}(\bfx):=\Psi_\theta(\bfx,0)$ (integrating backward from $T$ to $0$). Then the full cube-to-target transport is $\bdtau=\tilde{\bdtau}\circ G$.

\subsection{Discretized implementation}

In practice, we discretize the ODE with $N$ steps of size $h=T/N$.
Let $t_k=T-kh$ for $k=0,\dots,N$. The explicit Euler step map is
\begin{equation}\label{eq:euler_step}
	\bdtau^k(\bfx) = \bfx - h\,\bdv_\theta(\bfx,t_k),
\qquad
	k=0,\dots,N-1.
\end{equation}
The discretized flow is the composition
\[
	\begin{aligned}
		\widetilde{\bdtau}^{0:N}(\bfx)
		&=\bdtau^{N-1}\circ\cdots\circ \bdtau^{0}(\bfx),\\
		\bdtau^{0:N}(\bfu)
		&=\widetilde{\bdtau}^{0:N}(G(\bfu)).
	\end{aligned}
\]

\subsection{Diffusion-ISRQMC estimator}

Given scrambled points $\{\bfu_i\}_{i=1}^N$, we evaluate the IS integrand
\[
	\begin{aligned}
		\widehat\mu_N
		&= \frac{1}{N}\sum_{i=1}^N
		f(\bdtau^{0:N}(\bfu_i))\,
		\pi(\bdtau^{0:N}(\bfu_i))\\
		&\qquad\quad{}\times
		\abs{\det J_{\bdtau^{0:N}}(\bfu_i)}.
	\end{aligned}
\]
This is the unbiased importance-sampling estimator on the cube.

\begin{algorithm}[!htbp]
\caption{Diffusion-ISRQMC integration}
\begin{algorithmic}[1]
\REQUIRE integrand $f$, target density $\pi$ (up to normalizing constant if needed), score/diffusion vector field $\bdv_\theta$, scrambled point set $\{\bfu_i\}_{i=1}^N\subset(0,1)^d$, number of steps $N_{\text{step}}$
\FOR{$i=1$ to $N$}
	\STATE $\bfz \gets G(\bfu_i)$ \COMMENT{Inverse Gaussian CDF base map}
	\STATE $\bfx \gets \bfz$
	\FOR{$k=0$ to $N_{\text{step}}-1$}
		\STATE $\bfx \gets \bfx - h\,\bdv_\theta(\bfx,t_k)$
	\ENDFOR
	\STATE $\hat{x}_i \gets \bfx$
	\STATE Evaluate $f(\hat{x}_i)$, $\pi(\hat{x}_i)$, and $\abs{\det J_{\bdtau^{0:N}}(\bfu_i)}$
\ENDFOR
	\STATE \textbf{return} $\widehat\mu_N =$
	\STATE $\frac{1}{N}\sum_{i=1}^N f(\hat{x}_i)\pi(\hat{x}_i)\abs{\det J_{\bdtau^{0:N}}(\bfu_i)}$
\end{algorithmic}
\end{algorithm}

\section{Convergence analysis for general transport maps with QMC}
\label{sec:convergence}

This section develops a general sufficient-condition framework ensuring $h$ in \eqref{eq:h_def} satisfies the boundary growth condition.

\subsection{Assumptions}

We follow a standard pattern: control (i) $f$ growth, (ii) $\pi$ decay, (iii) transport derivative/value growth.

\begin{assumption}[Integrand $f$ growth]\label{assum:f}
Assume there exists a univariate CDF $F:\R\to(0,1)$ such that for all multi-indices $\lambda\in\mathbb{N}_0^d$ with $|\lambda|\le d$, and for arbitrarily small $B>0$, there exists $C_1>0$ such that for all $\bfx\in\R^d$,
\begin{equation}\label{eq:f_growth}
	\abs{\partial^\lambda f(\bfx)}
	\;\le\;
	C_1 \prod_{k=1}^d \Bigl[\min(F(x_k),1-F(x_k))\Bigr]^{-B}.
\end{equation}
\end{assumption}

\begin{assumption}[Target $\pi$ decay]\label{assum:pi}
Assume $\pi$ and its derivatives have Gaussian-type tail decay in the $F$-tails. Concretely, there exist $C_2>0$ and $\alpha>0$ such that for all multi-indices $\lambda$ with $|\lambda|\le d$,
\begin{equation}\label{eq:pi_decay}
	\abs{\partial^\lambda \pi(\bfx)}
	\;\le\;
	C_2 \prod_{k=1}^d e^{-\alpha x_k^2}.
\end{equation}
\end{assumption}

\begin{assumption}[Transport growth]\label{assum:tau}
Let the same $F$ as in Assumption~\ref{assum:f} be fixed, and write
$\rho(x):=\min(F(x),1-F(x))$. Assume that for any $v\subseteq 1\!:d$, any $m\in 1\!:d$ and any component $j\in 1\!:d$, and for arbitrarily small $B>0$, there exists $C_3>0$ such that for all $\bfu\in(0,1)^d$,
\begin{equation}\label{eq:tau_deriv_growth}
	\begin{aligned}
		\abs{\partial^v(\partial^{\{m\}}\tau_j)(\bfu)}
		&\le
		C_3\, \min(u_m,1-u_m)^{-1}\\
		&\quad\cdot
		\prod_{k=1}^d \min(u_k,1-u_k)^{-B-\Indc{k\in v}}.
	\end{aligned}
\end{equation}
Moreover, there exist $B_0>0$ and $C_4>0$ such that for any $k\in 1\!:d$,
\begin{equation}\label{eq:tau_value_growth}
	\begin{aligned}
		\rho(\tau(\bfu)_k)^{-1}
		&\le
		C_4\prod_{\ell=1}^d \min(u_\ell,1-u_\ell)^{-B_0}.
	\end{aligned}
\end{equation}
Finally, assume there exist $C_5>1/\alpha$ and $C'\in\R$ such that
\begin{equation}\label{eq:tau_log_lower}
	\begin{aligned}
		\abs{\tau(\bfu)_j}^2
		&\ge
		C_5\,\abs{\ln(\min(u_j,1-u_j))} - C'.
	\end{aligned}
\end{equation}
\end{assumption}

\begin{remark}
Assumption~\ref{assum:tau} has three roles: \eqref{eq:tau_deriv_growth} controls Jacobian / derivative blow-up near the cube boundary; \eqref{eq:tau_value_growth} allows us to translate tail growth of $f$ (measured in $F$-coordinates) back into boundary coordinates; \eqref{eq:tau_log_lower} ensures $\pi\circ\tau$ contributes enough decay to offset Jacobian singularities.
\end{remark}

\subsection{Growth analysis of the three factors}

Recall
\[
	h(\bfu)= (f\circ\bdtau)(\bfu)\cdot(\pi\circ\bdtau)(\bfu)\cdot \abs{\det J_{\bdtau}(\bfu)}.
\]
We now bound the mixed derivatives of each factor. The bounds for $f\circ\bdtau$ and $\abs{\det J_{\bdtau}}$ coincide with Propositions~8 and~10 in closely related prior work \cite{zeng2026flowmatching}, so we cite them directly. The bound for $\pi\circ\bdtau$ is similar but not identical to the corresponding result in that work, so we include a short proof.

\begin{proposition}[Growth of $f\circ\bdtau$]\label{prop:fcirc_tau}
Under Assumptions~\ref{assum:f} and \ref{assum:tau}, for arbitrarily small $B>0$, there exists $C>0$ such that for all $v\subseteq 1\!:d$ and $\bfu\in(0,1)^d$,
\[
	\abs{\partial^v(f\circ\bdtau)(\bfu)}
	\;\le\;
	C \prod_{k=1}^d \min(u_k,1-u_k)^{-B-\Indc{k\in v}}.
\]
\end{proposition}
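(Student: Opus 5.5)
We would prove the statement by the multivariate chain rule (Faà di Bruno for set-indexed mixed derivatives), combined with the growth bounds in Assumption~\ref{assum:f} and Assumption~\ref{assum:tau}. Fix $v\subseteq 1\!:d$. Since each $u_j$ with $j\in v$ is differentiated at most once, $\partial^v(f\circ\bdtau)$ is a finite sum over set partitions $\{\beta_1,\dots,\beta_s\}$ of $v$ and over choices of components $j_1,\dots,j_s\in 1\!:d$. Each summand has the form
\[
(\partial_{x_{j_1}}\cdots\partial_{x_{j_s}} f)(\bdtau(\bfu))\prod_{r=1}^s \partial^{\beta_r}\tau_{j_r}(\bfu).
\]
The number of terms depends only on $d$, so it suffices to bound each term by the target product.

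\emph{The outer factor.} The multi-index $\lambda$ here has $|\lambda|=s\le|v|\le d$. Assumption~\ref{assum:f} with a small exponent $B_1$ gives
\[
|\partial^\lambda f(\bdtau(\bfu))|\le C_1\prod_k \rho(\tau(\bfu)_k)^{-B_1}.
\]
By \eqref{eq:tau_value_growth}, each $\rho(\tau(\bfu)_k)^{-1}$ is at most $C_4\prod_\ell\min(u_\ell,1-u_\ell)^{-B_0}$. Hence the outer factor is bounded by $C\prod_\ell \min(u_\ell,1-u_\ell)^{-dB_0B_1}$. Choosing $B_1$ small makes this exponent as small as desired. This is the only place where the tail of $f$ is converted into boundary coordinates, and it is exactly the role of \eqref{eq:tau_value_growth}.

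\emph{The inner factors.} Each $\beta_r$ is nonempty, so pick $m_r\in\beta_r$ and write $\partial^{\beta_r}\tau_{j_r}=\partial^{\beta_r\setminus\{m_r\}}(\partial^{\{m_r\}}\tau_{j_r})$. Applying \eqref{eq:tau_deriv_growth} with a small exponent $B_2$ bounds this by
\[
C_3\min(u_{m_r},1-u_{m_r})^{-1}\prod_k\min(u_k,1-u_k)^{-B_2-\Indc{k\in\beta_r\setminus\{m_r\}}}=C_3\prod_k\min(u_k,1-u_k)^{-B_2-\Indc{k\in\beta_r}}.
\]
The $\beta_r$ are disjoint with union $v$, so multiplying over $r$ gives exponents $-sB_2-\Indc{k\in v}$. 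In particular, each coordinate in $v$ carries exactly one extra power $-1$.

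\emph{Combining.} Multiplying the two bounds, each term is at most $C\prod_k\min(u_k,1-u_k)^{-(dB_0B_1+dB_2)-\Indc{k\in v}}$. Given a target $B$, we choose $B_1,B_2$ so that $dB_0B_1+dB_2\le B$. The constants depend on these choices and on $d$, and we take the maximum over the finitely many $v$. The step requiring the most care is the combinatorial bookkeeping: we must check that the partition structure produces each singular power $\Indc{k\in v}$ exactly once, and never squared. This holds because the blocks are disjoint and each block's $-1$ comes from the distinguished index $m_r$ in \eqref{eq:tau_deriv_growth}. We also need the small-exponent constants to be uniform, which requires only that Assumptions~\ref{assum:f} and \ref{assum:tau} hold for every small exponent, as stated.
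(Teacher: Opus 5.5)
Your proof is correct and takes the same route the paper relies on. The paper does not reprove this statement; it cites Proposition~8 of \cite{zeng2026flowmatching}. Its own proof of Proposition~\ref{prop:picirc_tau} has exactly your structure: Fa\`a di Bruno over set partitions of $v$, an outer-factor bound (here \eqref{eq:f_growth} combined with \eqref{eq:tau_value_growth}, with the $f$-exponent taken small relative to $B_0$), and distinguished-index splitting with \eqref{eq:tau_deriv_growth} for the inner factors. Your inner-factor bookkeeping, in which $\min(u_{m_r},1-u_{m_r})^{-1}$ supplies exactly the missing indicator for $m_r\in\beta_r$, is cleaner than the paper's corresponding step, where bounding $\min(u_{\ell_r},1-u_{\ell_r})^{-1}$ by $\prod_{k\in v_r}\min(u_k,1-u_k)^{-1}$ would, read literally, double the exponent on $v_r\setminus\{\ell_r\}$.
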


\noindent\textit{Reference.} This is Proposition~8 in \cite{zeng2026flowmatching}.

\begin{proposition}[Growth of $\pi\circ\bdtau$]\label{prop:picirc_tau}
Under Assumptions~\ref{assum:pi} and \ref{assum:tau}, for arbitrarily small $B>0$, there exists $C>0$ such that for all $v\subseteq 1\!:d$,
\[
	\abs{\partial^v(\pi\circ\bdtau)(\bfu)}
	\;\le\;
	C \prod_{k=1}^d \min(u_k,1-u_k)^{1-B-\Indc{k\in v}}.
\]
\end{proposition}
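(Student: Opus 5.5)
The plan is to expand $\partial^v(\pi\circ\bdtau)$ with the multivariate Faà di Bruno formula, bound each term with Assumptions~\ref{assum:pi} and \ref{assum:tau}, and then let the Gaussian decay of $\pi$ absorb every singular factor.

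\textbf{Faà di Bruno expansion.} Since $\partial^v$ is a product of distinct first-order derivatives, the formula gives
\[
\partial^v(\pi\circ\bdtau)(\bfu)=\sum_{\sigma}\ \sum_{j_1,\dots,j_{|\sigma|}} (\partial_{j_1}\cdots\partial_{j_{|\sigma|}}\pi)(\bdtau(\bfu))\prod_{s\in\sigma}\partial^{s}\tau_{j_s}(\bfu).
\]
Here $\sigma$ runs over set partitions of $v$, and each block $s\in\sigma$ carries a component index $j_s\in 1\!:d$. The number of terms is bounded in terms of $d$, so it suffices to bound each term.

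\textbf{Bounding each term.} For the derivative of $\pi$, the multi-index has order $|\sigma|\le|v|\le d$, so Assumption~\ref{assum:pi} gives the bound $C_2\prod_k e^{-\alpha\tau(\bfu)_k^2}$. For each block $s$, pick some $m\in s$ and write $\partial^s\tau_{j_s}=\partial^{s\setminus\{m\}}(\partial^{\{m\}}\tau_{j_s})$. Then \eqref{eq:tau_deriv_growth} bounds it by
\[
C_3\,\min(u_m,1-u_m)^{-1}\prod_k\min(u_k,1-u_k)^{-B'-\Indc{k\in s\setminus\{m\}}}.
\]
This equals $C_3\prod_k\min(u_k,1-u_k)^{-B'-\Indc{k\in s}}$. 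Because the blocks partition $v$, the product over $s\in\sigma$ is at most $C\prod_k\min(u_k,1-u_k)^{-dB'-\Indc{k\in v}}$. The summand with $v=\emptyset$ is just $\pi\circ\bdtau$ itself, with no $\tau$-derivative factors.

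\textbf{Main step: using the Gaussian decay.} By \eqref{eq:tau_log_lower},
\[
e^{-\alpha\tau(\bfu)_k^2}\le e^{\alpha C'}\exp\bigl(-\alpha C_5\abs{\ln\min(u_k,1-u_k)}\bigr)=e^{\alpha C'}\min(u_k,1-u_k)^{\alpha C_5}.
\]
Since $\alpha C_5>1$, set $\delta:=\alpha C_5-1>0$. Multiplying the bounds from the previous step gives each term as at most
\[
C\prod_k\min(u_k,1-u_k)^{1+\delta-dB'-\Indc{k\in v}}.
\]
Given the target $B>0$, choose $B'$ small enough that $dB'\le B+\delta$. This is possible because \eqref{eq:tau_deriv_growth} holds for arbitrarily small $B'$. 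Since $\min(u_k,1-u_k)\le 1/2<1$, raising the exponent only decreases the quantity, so the bound loosens to $C\prod_k\min(u_k,1-u_k)^{1-B-\Indc{k\in v}}$, which is the claim. In fact any $B\ge0$ works here, because $\delta>0$ absorbs the loss.

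\textbf{Expected obstacle.} I expect the delicate part to be the bookkeeping in the second step. The derivative of $\pi$ is evaluated at $\bdtau(\bfu)$, and each coordinate decay factor $e^{-\alpha\tau_k^2}$ must be matched with the boundary variable $u_k$ of the same index. That matching is exactly what the componentwise form of \eqref{eq:tau_log_lower} provides. One also has to check that the extra $\min(u_m,1-u_m)^{-1}$ from each block is accounted for exactly once per element of $v$, so no exponent exceeds $1$ in any coordinate. Choosing the representative $m\in s$ inside each block guarantees this.
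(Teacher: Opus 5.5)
Your proposal is correct and follows the paper's proof essentially step for step. Both arguments use the Fa\`a di Bruno expansion, bound each block $\partial^{s}\tau_{j_s}$ via \eqref{eq:tau_deriv_growth} using a representative index inside the block, and turn the Gaussian decay into $\min(u_k,1-u_k)^{\alpha C_5}$ via \eqref{eq:tau_log_lower} with $\alpha C_5>1$; your explicit tracking of the accumulated exponent $|\sigma|B'\le dB'$ and its absorption by $\delta=\alpha C_5-1$ is slightly more careful than the paper's ``shrinking $B>0$ if needed'' step.
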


\begin{proof}
Fix $v\subseteq 1\!:d$.
By the multivariate Fa\`a di Bruno formula (in set-derivative notation), $\partial^v(\pi\circ\bdtau)$ is a finite sum (with the number of terms depending only on $d$) of products of the form
\[
T
=
(\partial^\lambda \pi)(\bdtau(\bfu))\cdot \prod_{r=1}^{|\lambda|} \partial^{v_r}\tau_{j_r}(\bfu),
\]
where $1\le |\lambda|\le |v|$, the sets $v_1,\dots,v_{|\lambda|}$ form a partition of $v$, and $j_r\in 1\!:d$.

For the $\pi$-factor, Assumption~\ref{assum:pi} gives
\[
\abs{(\partial^\lambda \pi)(\bdtau(\bfu))}
\le C\prod_{k=1}^d \exp\bigl(-\alpha\,\tau(\bfu)_k^2\bigr).
\]
Using the lower bound \eqref{eq:tau_log_lower}, we have
$\exp(-\alpha\tau(\bfu)_k^2)\le C\,\min(u_k,1-u_k)^{\alpha C_5}$ for each $k$, hence
\[
\abs{(\partial^\lambda \pi)(\bdtau(\bfu))}
\le C\prod_{k=1}^d \min(u_k,1-u_k)^{\alpha C_5}.
\]

For the transport-derivative product, apply Assumption~\ref{assum:tau} termwise: for each nonempty block $v_r$, pick any $\ell_r\in v_r$ and write
$\partial^{v_r}\tau_{j_r}=\partial^{v_r\setminus\{\ell_r\}}(\partial^{\{\ell_r\}}\tau_{j_r})$.
Then \eqref{eq:tau_deriv_growth} bounds each factor by
$C\,m_{\ell_r}^{-1}\prod_{k=1}^d \min(u_k,1-u_k)^{-B-\Indc{k\in v_r\setminus\{\ell_r\}}}$.
Multiplying over $r$ and using $m_{\ell_r}^{-1}\le \prod_{k\in v_r} \min(u_k,1-u_k)^{-1}$ gives
\[
\prod_{r=1}^{|\lambda|}\abs{\partial^{v_r}\tau_{j_r}(\bfu)}
\le
C\prod_{k=1}^d \min(u_k,1-u_k)^{-B-\Indc{k\in v}}
\]
after reabsorbing constants and shrinking $B>0$ if needed (since $|\lambda|\le |v|\le d$ is fixed).

Combining the two displays yields
\[
\abs{T}
\le
C\prod_{k=1}^d \min(u_k,1-u_k)^{\alpha C_5-B-\Indc{k\in v}}.
\]
Because $C_5>1/\alpha$ in Assumption~\ref{assum:tau}, we have $\alpha C_5>1$, and since $\min(u_k,1-u_k)\le 1$ this implies $\min(u_k,1-u_k)^{\alpha C_5}\le \min(u_k,1-u_k)$.
Therefore $\abs{T}\le C\prod_{k=1}^d \min(u_k,1-u_k)^{1-B-\Indc{k\in v}}$. Summing finitely many terms preserves the same bound (up to constants), proving the claim.
\end{proof}

\begin{proposition}[Growth of $\abs{\det J_{\bdtau}}$]\label{prop:det_growth}
Under Assumption~\ref{assum:tau}, for arbitrarily small $B>0$, there exists $C>0$ such that for all $v\subseteq 1\!:d$,
\[
	\abs{\partial^v \abs{\det J_{\bdtau}(\bfu)}}
	\;\le\;
	C \prod_{k=1}^d \min(u_k,1-u_k)^{-1-B-\Indc{k\in v}}.
\]
\end{proposition}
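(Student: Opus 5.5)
Write $m_k:=\min(u_k,1-u_k)$. The plan is to expand the determinant by the Leibniz formula and differentiate term by term.

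We have
\[
\det J_{\bdtau}(\bfu)=\sum_{\sigma\in S_d}\mathrm{sgn}(\sigma)\prod_{i=1}^d \partial^{\{\sigma(i)\}}\tau_i(\bfu),
\]
which is a finite sum with $d!$ terms. Since $\bdtau$ is a $C^1$ diffeomorphism on the connected set $(0,1)^d$, $\det J_{\bdtau}$ never vanishes and so has constant sign. Hence $\abs{\det J_{\bdtau}}=\pm\det J_{\bdtau}$, and it suffices to bound $\partial^v\det J_{\bdtau}$. This sign argument is what lets us avoid the nonsmoothness of the absolute value.

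Next fix $v\subseteq 1\!:d$ and a permutation $\sigma$. By the Leibniz product rule in set-derivative notation,
\[
\partial^v\prod_{i=1}^d \partial^{\{\sigma(i)\}}\tau_i=\sum_{(w_1,\dots,w_d)}\prod_{i=1}^d \partial^{w_i}\bigl(\partial^{\{\sigma(i)\}}\tau_i\bigr),
\]
where the sum runs over ordered partitions of $v$ into (possibly empty) sets $w_1,\dots,w_d$. The number of such partitions is at most $d^{|v|}\le d^d$. We apply \eqref{eq:tau_deriv_growth} to each factor with $m=\sigma(i)$, $j=i$ and the set $w_i$, using a small exponent $B'$ in place of $B$. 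This gives
\[
\abs{\partial^{w_i}(\partial^{\{\sigma(i)\}}\tau_i)}\le C_3\, m_{\sigma(i)}^{-1}\prod_{k=1}^d m_k^{-B'-\Indc{k\in w_i}}.
\]

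Now multiply these bounds over $i=1,\dots,d$. Because $\sigma$ is a bijection, the factors $m_{\sigma(i)}^{-1}$ multiply to exactly $\prod_k m_k^{-1}$. Because the $w_i$ are disjoint with union $v$, the indicator exponents sum to $\Indc{k\in v}$. The $B'$ exponents add up to $dB'$ in each coordinate. Choosing $B'=B/d$, which Assumption~\ref{assum:tau} allows since $B$ there is arbitrary, each term is bounded by $C\prod_k m_k^{-1-B-\Indc{k\in v}}$. Summing the finitely many terms over $\sigma$ and over the partitions keeps the same form with a larger constant, which proves the claim.

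The only delicate point is the bookkeeping in the product step: each coordinate must receive exactly one factor $m_k^{-1}$ from the first derivatives, and this is precisely what the permutation structure of the Leibniz expansion guarantees. By contrast, the Fa\`a di Bruno argument of Proposition~\ref{prop:picirc_tau} had to absorb extra $m^{-1}$ factors into the $\pi$-decay. The sign-constancy reduction for $\abs{\cdot}$ is routine, and no value-growth condition \eqref{eq:tau_value_growth} or \eqref{eq:tau_log_lower} is needed.
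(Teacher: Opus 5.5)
Your proof is correct and complete. The paper gives no argument of its own for this proposition; it simply cites Proposition~10 of \cite{zeng2026flowmatching}. Your Leibniz-permutation expansion is therefore a self-contained replacement for that citation rather than a competing route.

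What your version adds is transparency about which hypotheses are actually used:
\begin{itemize}
\item Only \eqref{eq:tau_deriv_growth} enters. It is applied with $B'=B/d$, and $C_3$ is taken as the maximum over the finitely many $(v,m,j)$, including the cases $\sigma(i)\in w_i$, which the assumption covers.
\item The value conditions \eqref{eq:tau_value_growth}--\eqref{eq:tau_log_lower} play no role.
\item The absolute value is handled by the constant sign of $\det J_{\bdtau}$. That sign comes from the standing $C^1$-diffeomorphism hypothesis of Section~\ref{subsec:transport_is}, not from Assumption~\ref{assum:tau} itself. Without it, $\abs{\det J_{\bdtau}}$ need not even be differentiable.
\end{itemize}
Making this explicit also exposes an unchecked hypothesis later in the paper. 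When the result is applied to $\bdtau^{0:N}$ in Section~\ref{sec:diffusion_growth}, the diffeomorphism property of the Euler map is never verified. It would hold, for instance, under a step-size condition making $x\mapsto x-h\,\bdv_\theta(x,t_k)$ invertible.

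One small inaccuracy in your closing aside, which does not affect your proof, concerns Proposition~\ref{prop:picirc_tau}. In that proof the transport-derivative product already yields exactly $\prod_k \min(u_k,1-u_k)^{-B-\Indc{k\in v}}$, so there are no surplus inverse factors to absorb. Instead, the $\pi$-decay supplies an extra factor $\min(u_k,1-u_k)^{+1}$ in each coordinate. That extra factor is what cancels the $-1$ you obtain here when the two propositions are combined in Theorem~\ref{thm:main_convergence}.
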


\noindent\textit{Reference.} This is Proposition~10 in \cite{zeng2026flowmatching}.

\subsection{Main convergence theorem}

\begin{theorem}[IS integrand satisfies boundary growth]\label{thm:main_convergence}
Under Assumptions~\ref{assum:f}, \ref{assum:pi}, \ref{assum:tau}, the integrand $h$ defined in \eqref{eq:h_def} satisfies Assumption~\ref{assump:growth}. Consequently, the scrambled-net IS-RQMC estimator has RMSE $O(N^{-1+\varepsilon})$ for any $\varepsilon>0$.
\end{theorem}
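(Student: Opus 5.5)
The plan is to apply the Leibniz rule in set-index form to the triple product $h=(f\circ\bdtau)\cdot(\pi\circ\bdtau)\cdot\abs{\det J_{\bdtau}}$. For fixed $v\subseteq 1\!:d$ this gives
\[
\partial^v h=\sum_{v_1\sqcup v_2\sqcup v_3=v}\partial^{v_1}(f\circ\bdtau)\,\partial^{v_2}(\pi\circ\bdtau)\,\partial^{v_3}\abs{\det J_{\bdtau}},
\]
a sum of $3^{|v|}\le 3^d$ terms. Each factor is then bounded separately, using Proposition~\ref{prop:fcirc_tau}, Proposition~\ref{prop:picirc_tau} and Proposition~\ref{prop:det_growth}, all with the same small $B'>0$ (to be fixed later).

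Write $m_k=\min(u_k,1-u_k)$. Since $v_1,v_2,v_3$ are disjoint with union $v$, we have $\Indc{k\in v_1}+\Indc{k\in v_2}+\Indc{k\in v_3}=\Indc{k\in v}$ for each $k$. Multiplying the three bounds therefore gives, for each term,
\[
C\prod_{k=1}^d m_k^{-B'-\Indc{k\in v_1}}\cdot m_k^{1-B'-\Indc{k\in v_2}}\cdot m_k^{-1-B'-\Indc{k\in v_3}}
= C\prod_{k=1}^d m_k^{-3B'-\Indc{k\in v}}.
\]
The key cancellation is that the $+1$ in the exponent from $\pi\circ\bdtau$ exactly offsets the $-1$ from the Jacobian determinant. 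The Gaussian-type decay of $\pi$, transferred to the cube through \eqref{eq:tau_log_lower}, thus absorbs the $\Phi^{-1}$-type singularity of $\abs{\det J_{\bdtau}}$. Choosing $B'=B/3$ and summing the finitely many terms yields \eqref{eq:owen_growth_form} with constant $3^dC$, uniformly in $v$ after taking the maximum over the finitely many $v$.

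Assumption~\ref{assump:growth} only requires the bound for some $B>0$, while the propositions give it for arbitrarily small $B$. So any fixed small $B$ works, and shrinking $B$ only enlarges the constants. Theorem~\ref{thm:owen_rate} then gives the $O(N^{-1+\varepsilon})$ RMSE.

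The only real obstacle is bookkeeping that the constants in Propositions~\ref{prop:fcirc_tau}--\ref{prop:det_growth} can be taken uniform over $v$ and that the $B$ parameters match. Both follow from finiteness of $\{v\subseteq 1\!:d\}$ and from the propositions holding for every small $B$. The substantive analytic content already sits in Proposition~\ref{prop:picirc_tau}, where $\alpha C_5>1$ is used.
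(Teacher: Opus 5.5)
Your proposal is correct and follows the paper's proof essentially step for step. You use the Leibniz expansion over ordered partitions $(v_1,v_2,v_3)$ of $v$, the bounds of Propositions~\ref{prop:fcirc_tau}--\ref{prop:det_growth}, the cancellation of the $+1$ from $\pi\circ\bdtau$ against the $-1$ from $\abs{\det J_{\bdtau}}$, finite summation, and then Theorem~\ref{thm:owen_rate}. Your explicit choice $B'=B/3$ is a cleaner version of the paper's tacit reabsorption of $3B$ into $B$; it yields the bound for every small $B>0$, which is what the $O(N^{-1+\varepsilon})$ rate actually needs, not merely for some $B$ as you state.
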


\begin{proof}
Fix $v\subseteq 1\!:d$. By the product rule (Leibniz rule),
$\partial^v h$ is a finite sum of terms of the form
\[
	T
	=
	\partial^{v_1}(f\circ\bdtau)\cdot
	\partial^{v_2}(\pi\circ\bdtau)\cdot
	\partial^{v_3}\abs{\det J_{\bdtau}},
\]
where $(v_1,v_2,v_3)$ is a partition of $v$.

Using Propositions~\ref{prop:fcirc_tau}, \ref{prop:picirc_tau}, \ref{prop:det_growth},
\[
\begin{aligned}
\abs{T}
&\le
C
\prod_{k=1}^d \min(u_k,1-u_k)^{-B-\Indc{k\in v_1}}\\
&\quad\cdot
\prod_{k=1}^d \min(u_k,1-u_k)^{1-B-\Indc{k\in v_2}}\\
&\quad\cdot
\prod_{k=1}^d \min(u_k,1-u_k)^{-1-B-\Indc{k\in v_3}}.
\end{aligned}
\]
The $+1$ and $-1$ cancel between the $\pi\circ\tau$ and determinant factors, leaving
\[
	\abs{T}
	\le
	C
	\prod_{k=1}^d
	\min(u_k,1-u_k)^{-B-\sum_{r=1}^3\Indc{k\in v_r}}.
\]
Since $(v_1,v_2,v_3)$ partitions $v$, we have
$\Indc{k\in v_1}+\Indc{k\in v_2}+\Indc{k\in v_3}=\Indc{k\in v}$.
Thus
\[
	\abs{T}
	\le
	C \prod_{k=1}^d \min(u_k,1-u_k)^{-B-\Indc{k\in v}}.
\]
Summing finitely many terms preserves the same bound (up to constants). This is exactly Assumption~\ref{assump:growth}. The RMSE claim follows from Theorem~\ref{thm:owen_rate}.
\end{proof}

\section{Growth condition verification for diffusion transport map}
\label{sec:diffusion_growth}

We now verify Assumption~\ref{assum:tau} for the diffusion transport map
\[
	\bdtau^{0:N}(\bfu) = \bdtau^{N-1}\circ\cdots\circ \bdtau^0 \circ G(\bfu),
\]
where $G=\Phi^{-1}$ is applied component-wise and $\bdtau^k(\bfx)=\bfx-h\bdv_\theta(\bfx,t_k)$.

\subsection{Setup and assumptions on the vector field}

\begin{assumption}[Bounded diffusion vector field and derivatives]\label{assum:v_bounded}
Assume that $\bdv_\theta(\bfx,t)$ and its spatial derivatives are uniformly bounded up to order $d+1$. Formally, for any multi-index $\alpha\in\mathbb{N}_0^d$ with $0\le |\alpha|\le d+1$, there exists $M_\alpha<\infty$ such that for all components $j\in 1\!:d$,
\[
	\sup_{\bfx\in\R^d,\;t\in[0,T]}
	\abs{\partial_{\bfx}^{\alpha} v_{\theta,j}(\bfx,t)}
	\le M_\alpha.
\]
In particular, taking $|\alpha|=0$ yields global boundedness of $\bdv_\theta$, and taking $|\alpha|=1$ yields global Lipschitz continuity in $\bfx$ (both uniformly in $t$).
\end{assumption}

This is a sufficient regularity condition (and stronger than what is needed in some diffusion analyses). It mirrors the ``bounded velocity field'' condition used in flow-matching-style proofs and can be enforced, for example, by using bounded activations or explicit constraints/regularization.

\subsection{Auxiliary bounds for the Gaussian base map}

We collect key inequalities for $\Phi^{-1}$ that we will reuse.

\begin{lemma}[Quantile growth]\label{lem:quantile_growth}
There exist constants $c_1,c_2>0$ such that for all $u\in(0,1/2]$,
\[
	c_1 \sqrt{\abs{\ln u}} \le \abs{\Phi^{-1}(u)} \le c_2 \sqrt{\abs{\ln u}}.
\]
An analogous bound holds for $u\in[1/2,1)$ by symmetry.
\end{lemma}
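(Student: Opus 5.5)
To prove Lemma~\ref{lem:quantile_growth}, I will reduce everything to the standard Mills-ratio estimates for the Gaussian tail. Set $z=-\Phi^{-1}(u)\ge 0$ for $u\in(0,1/2]$, so that $u=\Phi(-z)=1-\Phi(z)$ and $\abs{\Phi^{-1}(u)}=z$. The goal is to show that $z^2$ and $\abs{\ln u}$ are comparable. The comparability is only needed up to constants, and it must hold uniformly on $(0,1/2]$.

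I will split $(0,1/2]$ into a tail region $u\in(0,u_0]$ and a compact region $u\in[u_0,1/2]$, with $u_0=\Phi(-1)$, so that $z\ge 1$ on the tail.

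On the tail region I will use the classical bounds, valid for $z>0$,
\[
\frac{z}{1+z^2}\,\varphi(z)\;\le\;1-\Phi(z)\;\le\;\frac{\varphi(z)}{z}.
\]
For $z\ge1$ these give $1-\Phi(z)\le e^{-z^2/2}$, hence $\abs{\ln u}\ge z^2/2$, which yields the upper bound $z\le\sqrt2\,\sqrt{\abs{\ln u}}$. For the other direction, they give $u\ge \frac{1}{2z}\varphi(z)\ge c\,e^{-z^2}$ for $z\ge1$, using $z e^{-z^2/2}$ bounded. Hence $\abs{\ln u}\le z^2+C\le (1+C)z^2$, again using $z\ge1$, which yields the lower bound $z\ge c_1\sqrt{\abs{\ln u}}$.

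The compact region is the delicate spot, because at $u=1/2$ both $\abs{\Phi^{-1}(u)}$ and $\ln u$ are nonzero ($\Phi^{-1}(1/2)=0$ while $\abs{\ln(1/2)}=\ln 2>0$). So the lower bound $c_1\sqrt{\abs{\ln u}}\le\abs{\Phi^{-1}(u)}$ fails at $u=1/2$ as literally stated. The upper bound is harmless there: on $[u_0,1/2]$ we have $z\le1$ and $\abs{\ln u}\ge\ln 2$, so enlarging $c_2$ suffices. For the lower bound, I will interpret the statement as holding on $(0,u_0]$, or equivalently for $u$ bounded away from $1/2$. This is all the later growth arguments need, since they only concern behaviour near the boundary $u\to0$. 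Alternatively, I will note that the statement holds in the form $\abs{\Phi^{-1}(u)}\ge c_1\sqrt{\abs{\ln u}}-C$ on all of $(0,1/2]$. This additive-constant form is exactly what \eqref{eq:tau_log_lower} permits through $C'$.

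Finally, for $u\in[1/2,1)$ I will use $\Phi^{-1}(u)=-\Phi^{-1}(1-u)$ and apply the above with $1-u$ in place of $u$.

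I expect the main obstacle to be correctly handling this neighbourhood of $u=1/2$: choosing the right form of the lower bound (restricted domain or additive constant) so that it remains usable in the later verification of \eqref{eq:tau_log_lower}. The tail estimates themselves are routine.
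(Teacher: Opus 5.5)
Your proof is correct. The paper states this lemma without any proof; it only gestures at Mills-ratio asymptotics in the proof of Lemma~\ref{lem:probit_derivs}. Your two-sided Mills bounds on $(0,\Phi(-1)]$, together with the compact-region patch, are therefore exactly the missing argument.

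Your objection to the statement is also right, and it is a genuine error in the paper. We have $\Phi^{-1}(1/2)=0$ while $\sqrt{\abs{\ln(1/2)}}=\sqrt{\ln 2}$, and $\abs{\Phi^{-1}(u)}\to 0$ as $u\to 1/2$. So no $c_1>0$ works on any neighbourhood of $1/2$.

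Either of your repairs costs nothing downstream. Theorem~\ref{thm:diff_value_growth} uses the lower bound only in the form $\abs{\tau^{0:N}(\bfu)_k}\ge c_1\sqrt{\abs{\ln\min(u_k,1-u_k)}}-TM$, where an extra additive constant is simply absorbed into $TM$ and then into $C'$. Its inverse-tail step needs only the upper bound, which you show holds on all of $(0,1/2]$.

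One caveat concerns your remark that the additive-constant form is all that \eqref{eq:tau_log_lower} needs. That condition also demands $C_5>1/\alpha$, so the size of $c_1$ matters, not only the structure of the bound.
\begin{itemize}
\item Your constant $c_1^2=1/(1+C)$ is far from sharp. If you keep the logarithm in your estimate $u\ge\varphi(z)/(2z)$, i.e.\ $\abs{\ln u}\le z^2/2+\ln z+C$, you get $\abs{\Phi^{-1}(u)}^2\ge(2-\delta)\abs{\ln u}-C_\delta$ for every $\delta>0$.
\item Conversely, $u\le\varphi(z)/z$ gives $z^2\le 2\abs{\ln u}-2\ln z-\ln(2\pi)$, so the value $2$ itself cannot be reached with an additive constant.
\end{itemize}
Now take $\bdv_\theta\equiv 0$, which is admissible under Assumption~\ref{assum:v_bounded}. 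For this field, \eqref{eq:tau_log_lower} holds precisely when $C_5<2$. Hence the chain through Assumption~\ref{assum:tau} requires $\alpha>1/2$. Theorem~\ref{thm:diff_value_growth}, which only asserts $C_5>0$, leaves this restriction unaddressed.

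None of this affects the lemma itself, which asks only for some $c_1>0$.
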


\begin{lemma}[Derivative growth of the inverse Gaussian CDF]\label{lem:probit_derivs}
For each integer $k\ge 1$, there exist constants $C_k>0$ such that for all $u\in(0,1)$,
\begin{equation}\label{eq:probit_deriv_bd}
	\begin{split}
		\abs{(\Phi^{-1})^{(k)}(u)}
		\le
		C_k&\, \bigl[\min(u,1-u)\bigr]^{-k} \\ 
		&\cdot
		\abs{\ln(\min(u,1-u))}^{(k-1)/2}.
	\end{split}
\end{equation}
In particular, for any $\varepsilon>0$ there exists $C_{k,\varepsilon}$ such that
\[
	\abs{(\Phi^{-1})^{(k)}(u)}
	\le C_{k,\varepsilon}\, \bigl[\min(u,1-u)\bigr]^{-k-\varepsilon}.
\]
\end{lemma}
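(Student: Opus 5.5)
By the symmetry $\Phi^{-1}(1-u)=-\Phi^{-1}(u)$, it suffices to treat $u\in(0,1/2]$, where $\min(u,1-u)=u$. Write $g(u):=\Phi^{-1}(u)$ and $\varphi$ for the standard normal density. The starting point is the identity $g'(u)=1/\varphi(g(u))$, obtained by differentiating $\Phi(g(u))=u$. Using $\varphi'(z)=-z\varphi(z)$, I would show by induction on $k$ that
\[
g^{(k)}(u)=\frac{P_k\bigl(g(u)\bigr)}{\varphi\bigl(g(u)\bigr)^{k}},
\]
where $P_k$ is a polynomial of degree $k-1$. The base case is $P_1=1$. For the step, differentiate $P_k(g)\varphi(g)^{-k}$: each differentiation produces a factor $g'=1/\varphi(g)$, and the chain rule gives
\[
P_{k+1}(z)=P_k'(z)+k\,z\,P_k(z),
\]
which raises the degree by exactly one.

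The key step is the Mills-ratio comparison between $\varphi(g(u))$ and $u$ as $u\to 0$. For $z=g(u)\le -1$, the standard bounds
\[
\frac{|z|}{1+z^2}\,\varphi(z)\le \Phi(z)\le \frac{\varphi(z)}{|z|}
\]
give $\varphi(g(u))\ge |g(u)|\,u$. Hence
\[
|g^{(k)}(u)|\le \frac{C\,(1+|g(u)|)^{k-1}}{|g(u)|^{k}\,u^{k}}\le C\,u^{-k}\,|g(u)|^{-1}
\]
on this range. This is even better than the claimed bound. Lemma~\ref{lem:quantile_growth} gives $|g(u)|\asymp\sqrt{|\ln u|}$, so the factor $|\ln u|^{(k-1)/2}$ in \eqref{eq:probit_deriv_bd} is more than enough.

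If I want a cruder route that matches the stated exponent exactly, I would use only the lower bound $\varphi(g(u))\ge c\,u$, which follows from the same Mills inequality for $|g|\ge1$. Together with $|P_k(g)|\le C(1+|g|)^{k-1}\le C'|\ln u|^{(k-1)/2}$ from Lemma~\ref{lem:quantile_growth}, this gives \eqref{eq:probit_deriv_bd} directly.

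On the compact middle range $u\in[\Phi(-1),1/2]$, $g^{(k)}$ is continuous and hence bounded. The right-hand side of \eqref{eq:probit_deriv_bd} is bounded below by a positive constant there, except near $u=1$, where $|\ln u|\to0$ and the log factor vanishes. I would handle this by replacing $|\ln u|$ with $\max(|\ln u|,\ln 2)$ in the argument, which is harmless because on $(0,1/2]$ we have $|\ln u|\ge\ln 2$ anyway. This fixes $C_k$.

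For the ``in particular'' clause, note that $|\ln u|^{(k-1)/2}\le C_{k,\varepsilon}\,u^{-\varepsilon}$ on $(0,1/2]$ for every $\varepsilon>0$, since $u^{\varepsilon}|\ln u|^{(k-1)/2}\to0$ as $u\to 0$.

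I expect the main (mild) obstacle to be making the Mills-ratio lower bound $\varphi(\Phi^{-1}(u))\ge c\,u\sqrt{|\ln u|}$ (or at least $\ge c\,u$) uniform and rigorous near $0$. Once that is in place, the induction and the logarithmic absorption are routine.
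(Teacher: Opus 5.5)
Your proof is correct and follows the route the paper only sketches: you make the repeated differentiation of $\Phi^{-1}$ explicit through $g^{(k)}=P_k(g)/\varphi(g)^k$ with $P_{k+1}=P_k'+kzP_k$, and combine it with the lower-tail Mills bound $\varphi(\Phi^{-1}(u))\ge u\,|\Phi^{-1}(u)|$ and Lemma~\ref{lem:quantile_growth}. Your remark that this actually gives the sharper bound $C\,u^{-k}|\ln u|^{-1/2}$ is also right, since the stated exponent $(k-1)/2$ is what your cruder route produces; and your worry about $|\ln u|\to 0$ near $u=1$ is unnecessary, because $\min(u,1-u)\le 1/2$ forces $|\ln(\min(u,1-u))|\ge\ln 2$.
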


\begin{proof}
These bounds are standard consequences of Mills ratio asymptotics and repeated differentiation of the inverse CDF; the logarithmic factor arises because $\phi(\Phi^{-1}(u))\sim u\sqrt{|\ln u|}$ in the tail. The last inequality follows since for any $\varepsilon>0$, $\abs{\ln u}^{a}\le u^{-\varepsilon}$ for sufficiently small $u$, and similarly near $1$.
\end{proof}

\begin{lemma}[Sub-polynomial exponential bound]\label{lem:subpoly}
For any $c>0$ and any $\varepsilon>0$, there exists $C>0$ such that for all $u\in(0,1/2]$,
\[
	\exp\!\bigl(c\sqrt{\abs{\ln u}}\bigr) \le C\,u^{-\varepsilon}.
\]
\end{lemma}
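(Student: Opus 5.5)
The claim is that $\exp(c\sqrt{\abs{\ln u}})$ grows more slowly than any negative power of $u$. The plan is to pass to the variable $s=\abs{\ln u}=-\ln u$, which is at least $\ln 2$ on $(0,1/2]$. In this variable the claim reads
\[
\exp\!\bigl(c\sqrt{s}\bigr)\le C\,e^{\varepsilon s}\qquad\text{for all } s\ge \ln 2,
\]
because $u^{-\varepsilon}=e^{\varepsilon s}$. Taking logarithms, it suffices to show that
\[
c\sqrt{s}-\varepsilon s\le \ln C \qquad\text{for all } s\ge\ln 2,
\]
so the task reduces to bounding a one-variable function from above.

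Next I would bound $g(s):=c\sqrt{s}-\varepsilon s$ explicitly. Writing $r=\sqrt{s}\ge 0$, we have $g=cr-\varepsilon r^2$, a concave quadratic in $r$. Its maximum over all $r\in\R$ is attained at $r=c/(2\varepsilon)$, and the maximum value is $c^2/(4\varepsilon)$. Restricting to $s\ge\ln 2$ can only lower the supremum, so $g(s)\le c^2/(4\varepsilon)$ there.

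One could instead use the elementary (AM--GM) inequality $c\sqrt{s}\le \varepsilon s + c^2/(4\varepsilon)$, which gives the same bound. Either way, we can take $C=\exp\bigl(c^2/(4\varepsilon)\bigr)$, which depends only on $c$ and $\varepsilon$, as required.

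There is no genuine obstacle here. The only point needing care is keeping the sign convention $\abs{\ln u}=-\ln u$ for $u\in(0,1/2]$ consistent when rewriting $u^{-\varepsilon}$ as $e^{\varepsilon s}$.

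As in Lemma~\ref{lem:quantile_growth}, the same bound holds on $[1/2,1)$ with $u$ replaced by $1-u$, by symmetry. This is the form used later when Lemma~\ref{lem:subpoly} is combined with Lemma~\ref{lem:quantile_growth} to absorb factors of the form $e^{L\abs{\Phi^{-1}(u)}}$ into arbitrarily small powers $\min(u,1-u)^{-B}$.
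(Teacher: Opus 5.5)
Your proof is correct and is essentially the paper's argument: both substitute $u=e^{-s^2}$ (your $r=\sqrt{s}$), split off $e^{\varepsilon s^2}=u^{-\varepsilon}$, and bound the remaining concave quadratic exponent $cs-\varepsilon s^2$ by its supremum. The only difference is that you compute this supremum explicitly, giving the concrete constant $C=\exp\bigl(c^2/(4\varepsilon)\bigr)$, whereas the paper only notes that it is finite.
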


\begin{proof}
Let $s=\sqrt{\abs{\ln u}}$, so $u=e^{-s^2}$. Then $\exp(cs)=\exp(\varepsilon s^2)\exp(cs-\varepsilon s^2)\le \exp(\varepsilon s^2)\cdot \sup_{s\ge 0}\exp(cs-\varepsilon s^2)$. The supremum is finite, and $\exp(\varepsilon s^2)=u^{-\varepsilon}$.
\end{proof}

\subsection{Value growth verification}

We first show that the diffusion map satisfies a version of \eqref{eq:tau_value_growth} and \eqref{eq:tau_log_lower}. The proof follows the same organization as the flow-matching value growth theorem, and uses Lemma~\ref{lem:subpoly} to handle Gaussian tails.

\begin{theorem}[Value growth of diffusion transport]\label{thm:diff_value_growth}
Let $\bdtau^{0:N}(\bfu)=\tilde{\bdtau}^{0:N}(G(\bfu))$ be the discretized diffusion transport with $G=\Phi^{-1}$ applied component-wise and Euler flow maps \eqref{eq:euler_step}. Under Assumption~\ref{assum:v_bounded}, there exist constants $B_0>0$ and $C_4>0$ such that for all $\bfu\in(0,1)^d$ and all $k\in 1\!:d$,
\[
	\begin{aligned}
	&\min\!\bigl(\Phi(\tau^{0:N}(\bfu)_k),\,1-\Phi(\tau^{0:N}(\bfu)_k)\bigr)^{-1}\\
	\le
	&C_4 \prod_{\ell=1}^d \min(u_\ell,1-u_\ell)^{-B_0}.
	\end{aligned}
\]
Moreover, there exist $C_5>0$ and $C'$ such that
\[
	\abs{\tau^{0:N}(\bfu)_k}^2
	\ge
	C_5\,\abs{\ln(\min(u_k,1-u_k))} - C'.
\]
\end{theorem}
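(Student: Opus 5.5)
The plan rests on one observation: under Assumption~\ref{assum:v_bounded} the Euler flow moves each point by a uniformly bounded amount. Write $\bfz=G(\bfu)$, so $z_k=\Phi^{-1}(u_k)$. Each step \eqref{eq:euler_step} changes every coordinate by at most $h M_0$, where $M_0$ is the sup bound on $\bdv_\theta$ from Assumption~\ref{assum:v_bounded} with $|\alpha|=0$. Summing over the $N$ steps gives, for every $k$,
\[
\tau^{0:N}(\bfu)_k = z_k+\delta_k,\qquad |\delta_k|\le N h M_0 = T M_0 =: D .
\]
So the transported coordinate is a bounded perturbation of the Gaussian quantile of $u_k$ alone. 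Both claims then reduce to one-dimensional statements about $\Phi^{-1}$. By the symmetry $\Phi^{-1}(1-u)=-\Phi^{-1}(u)$, it suffices to treat $m_k:=\min(u_k,1-u_k)$ with $u_k\le 1/2$.

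\textbf{Value growth.} Set $x=z_k+\delta_k$. I will use two-sided Mills-ratio bounds. The first is the tail lower bound
\[
\min(\Phi(x),1-\Phi(x))\ge c\,(1+|x|)^{-1}e^{-x^2/2},
\]
which gives $\rho(x)^{-1}\le C(1+|x|)e^{x^2/2}$. Next, $x^2/2\le z_k^2/2 + D|z_k| + D^2/2$. The upper Mills bound applied at $z_k$ (where $\Phi(z_k)=u_k$) gives $e^{z_k^2/2}\le C\,m_k^{-1}$ (in fact with an extra $|z_k|^{-1}$, which we may discard). The remaining factors $e^{D|z_k|}$ and $(1+|z_k|+D)$ are controlled using Lemma~\ref{lem:quantile_growth}, namely $|z_k|\le c_2\sqrt{|\ln m_k|}$. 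Lemma~\ref{lem:subpoly} then bounds each of these factors by $C m_k^{-\varepsilon}$. This yields $\rho(\tau_k)^{-1}\le C_4\, m_k^{-1-2\varepsilon}$. Since every factor $\min(u_\ell,1-u_\ell)^{-B_0}$ is at least $1$, this is dominated by $C_4\prod_{\ell} \min(u_\ell,1-u_\ell)^{-B_0}$ with $B_0=1+2\varepsilon$.

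\textbf{Log lower bound.} From $|\tau_k|\ge |z_k|-D$ and the elementary inequality $(a-b)^2\ge (1-\eta)a^2-(\eta^{-1}-1)b^2$ for $\eta\in(0,1)$, we get
\[
|\tau_k|^2\ge (1-\eta) z_k^2 - (\eta^{-1}-1)D^2 .
\]
The lower half of Lemma~\ref{lem:quantile_growth} gives $z_k^2\ge c_1^2|\ln m_k|$. Hence the claim holds with $C_5=(1-\eta)c_1^2$ and $C'=(\eta^{-1}-1)D^2$. If one wants $C_5$ as large as possible (relevant for the requirement $C_5>1/\alpha$ in Assumption~\ref{assum:tau}), one can use the sharp asymptotic $z_k^2=2|\ln m_k|-\ln|\ln m_k|+O(1)$. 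This lets $C_5$ be any constant below $2$, with the lower-order terms absorbed into $C'$.

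\textbf{Main obstacle.} The delicate step is the value-growth bound. The shift $\delta_k$ enters the Gaussian exponent through the cross term $e^{D|z_k|}$, which is not bounded. The point is that this term is only $\exp(O(\sqrt{|\ln m_k|}))$, so Lemma~\ref{lem:subpoly} makes it sub-polynomial in $m_k$. The one thing to check carefully is that the Mills-ratio constants are uniform on all of $(0,1/2]$, not only asymptotically. Near $u_k=1/2$ everything is bounded, so this only requires splitting off a compact middle region.
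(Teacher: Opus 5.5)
Your proposal is correct and follows essentially the same route as the paper: the uniform Euler displacement bound $|\tau^{0:N}(\bfu)_k-\Phi^{-1}(u_k)|\le TM$ reduces both claims to one-dimensional Gaussian-tail estimates, which you then close with Lemma~\ref{lem:quantile_growth} and Lemma~\ref{lem:subpoly}. Your explicit two-sided Mills-ratio computation, which isolates the cross term $e^{D|z_k|}$, fills in the paper's brief claim that $\Phi(z)/\Phi(z-TM)$ is sub-polynomial in $u_k$, and your $\eta$-splitting makes rigorous the paper's ``squaring and absorbing constants'' step.
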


\begin{proof}
Since $\bdv_\theta$ is uniformly bounded by $M$, the cumulative displacement of the Euler discretization is bounded by $TM$. Thus, for $\bfx=G(\bfu)$, we have
\begin{equation}\label{eq:bounded_shift}
    \abs{\tau^{0:N}(\bfu)_k - \Phi^{-1}(u_k)} \le TM, \quad \forall k.
\end{equation}
\textbf{Tail growth.} Using \eqref{eq:bounded_shift} and Lemma~\ref{lem:quantile_growth}, we have $\abs{\tau^{0:N}(\bfu)_k} \ge c_1\sqrt{|\ln \min(u_k,1-u_k)|} - TM$, where $\min(u_k,1-u_k) = \min(u_k, 1-u_k)$. Squaring this lower bound and absorbing constants yields the second claim: $\abs{\tau^{0:N}(\bfu)_k}^2 \ge C_5 |\ln \min(u_k,1-u_k)| - C'$.

\textbf{Inverse-tail control.} For the first claim, consider the lower tail $u_k \le 1/2$. Eq.~\eqref{eq:bounded_shift} implies $\Phi(\tau_k) \ge \Phi(\Phi^{-1}(u_k) - TM)$.
Since Gaussian tails decay super-exponentially, the ratio $\Phi(z-C)/\Phi(z)$ behaves sub-polynomially.
By Lemma~\ref{lem:subpoly}, for any $B_0 > 0$, we have $\frac{u_k}{\Phi(\Phi^{-1}(u_k)-TM)} \le C u_k^{-B_0}$.
Hence $\Phi(\tau_k)^{-1} \le C u_k^{-1-B_0}$.
Symmetry handles the upper tail.
Bounding the component-wise minimum by the product over all dimensions completes the proof.
\end{proof}

\subsection{Derivative growth verification}

We now prove the analogue of the induction-based derivative growth theorem.

\begin{theorem}[Derivative growth of diffusion transport]\label{thm:diff_deriv_growth}
Under Assumption~\ref{assum:v_bounded}, the discretized diffusion transport map $\bdtau^{0:N}$ satisfies the derivative growth condition \eqref{eq:tau_deriv_growth} in Assumption~\ref{assum:tau} (for arbitrarily small $B>0$).
\end{theorem}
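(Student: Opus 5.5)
The proof splits the map into two pieces: the smooth flow, whose derivatives are all bounded, and the Gaussian base map $G$, which carries all the boundary singularities. Write $\bdtau^{0:N}=\widetilde{\bdtau}^{0:N}\circ G$, where $\widetilde{\bdtau}^{0:N}$ is the composition of the Euler maps \eqref{eq:euler_step}.

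\textbf{Step 1: the flow has bounded derivatives.} Each step map $\bdtau^k(\bfx)=\bfx-h\,\bdv_\theta(\bfx,t_k)$ has Jacobian $I-h\nabla\bdv_\theta$, bounded by $1+hM_1$. Its higher spatial derivatives up to order $d+1$ are $h\,\partial^\alpha\bdv_\theta$, hence bounded by Assumption~\ref{assum:v_bounded}. I will prove by induction on the number of steps that every composition $\bdtau^{k}\circ\cdots\circ\bdtau^0$ has all partial derivatives of orders $1$ through $d+1$ bounded uniformly on $\R^d$. The induction step applies the multivariate Fa\`a di Bruno formula to $\bdtau^{k+1}\circ(\text{previous composition})$. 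Each term is a bounded derivative of $\bdtau^{k+1}$ times a product of bounded derivatives of the previous composition. The resulting constants depend on $N$, $h$, $d$ and the $M_\alpha$, but not on $\bfx$. Because $N$ is fixed, no uniformity in $N$ is needed.

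\textbf{Step 2: compose with $G$.} Fix $v$, $m$ and $j$, and set $w=v\cup\{m\}$. The quantity $\partial^v\partial^{\{m\}}\tau_j$ equals $\partial^{v}\partial^{\{m\}}(\tilde\tau_j\circ G)$. If $m\in v$, the map $G$ is separable, so the operator involves a second derivative in $u_m$. Since $G$ is separable, each $G_\ell$ depends only on $u_\ell$, and the chain rule gives a finite sum of terms of the form
\[
(\partial^{\lambda}\tilde\tau_j)(G(\bfu))\prod_{\ell\in w}(\Phi^{-1})^{(n_\ell)}(u_\ell),
\]
where $n_\ell=\Indc{\ell\in v}+\Indc{\ell=m}$ and $|\lambda|=|w|\le d$. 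Mixed partials across different coordinates do not interact, because $\partial_{u_\ell}G_k=0$ for $k\ne\ell$. In the second-derivative case we use $\partial_{u_m}^2(\tilde\tau_j\circ G)=(\partial_m^2\tilde\tau_j)(\Phi^{-1})'^2+(\partial_m\tilde\tau_j)(\Phi^{-1})''$. The first factor in each term is bounded by Step 1. For the second factor, apply Lemma~\ref{lem:probit_derivs} with exponent $\varepsilon=B$. This gives $\prod_{\ell\in w}\min(u_\ell,1-u_\ell)^{-n_\ell-B}$, and the exponent on coordinate $\ell$ is exactly $-B-\Indc{\ell\in v}-\Indc{\ell=m}$. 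For coordinates $\ell\notin w$ we insert the harmless factor $\min(u_\ell,1-u_\ell)^{-B}\ge1$. This reproduces \eqref{eq:tau_deriv_growth}.

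\textbf{Main obstacle.} The delicate point is the case $m\in v$. There the required bound is $\min(u_m,1-u_m)^{-2-B}$, which is exactly what $(\Phi^{-1})''$ and $((\Phi^{-1})')^2$ provide. The logarithmic factor $|\ln|^{1/2}$ in Lemma~\ref{lem:probit_derivs} must be absorbed into $u^{-B}$, which costs a constant depending on $B$. Beyond that, the work is bookkeeping. Step 1 needs derivatives of $\tilde\tau$ only up to order $|w|\le d$, so the order-$(d+1)$ bound in Assumption~\ref{assum:v_bounded} leaves slack. Finally, summing the finitely many terms, whose number depends only on $d$, preserves the bound up to constants.
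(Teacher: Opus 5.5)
Your proof is correct, but it is organized differently from the paper's. The paper inducts directly on the singular composites $\bdtau^{0:k}=\bdtau^{k-1}\circ\cdots\circ\bdtau^0\circ G$. Its base case is $G$ itself. Each Euler step is then shown to preserve \eqref{eq:tau_deriv_growth} by expanding $\partial^{v}\partial^{\{m\}}(v_{\theta,j}\circ\bdtau^{0:k})$ with the product rule and Fa\`a di Bruno, where the inner derivatives are the singular ones controlled by the inductive hypothesis.

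You instead compose the Euler maps first and show that $\widetilde{\bdtau}^{0:N}$ has uniformly bounded derivatives of orders $1$ to $d+1$, a statement with no singularities at all. You then compose with $G$ once, and separability collapses the chain rule to products of univariate factors $(\Phi^{-1})^{(n_\ell)}(u_\ell)$.

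Your route makes the bookkeeping cleaner. The boundary singularities enter exactly once, and the exponent $-\Indc{\ell\in v}-\Indc{\ell=m}$ can be read off directly. Also, $B$ only has to absorb the logarithmic factors from Lemma~\ref{lem:probit_derivs}. In the paper's induction, $B$ is implicitly shrunk at each of the $N$ steps when the block bounds are recombined. The paper's route, after the base case, uses only the growth property of the inner map, so it mirrors the flow-matching argument. Your argument would also extend to a non-separable base map if you replace the separable chain rule with one Fa\`a di Bruno step.

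One bookkeeping correction. When $m\in v$, the term
$(\partial^{v\setminus\{m\}}\partial_m^2\tilde\tau_j)(G(\bfu))\,((\Phi^{-1})'(u_m))^2\prod_{\ell\in v\setminus\{m\}}(\Phi^{-1})'(u_\ell)$
involves a derivative of $\tilde\tau_j$ of order $|v|+1$, which equals $d+1$ when $v=1\!:d$. So this term does not fit your displayed form with $|\lambda|=|w|$. The order-$(d+1)$ bound in Assumption~\ref{assum:v_bounded} is therefore used exactly rather than leaving slack. Since your Step~1 already covers order $d+1$, nothing breaks. Also take $\varepsilon=B/2$ for the squared first-derivative factor.
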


\begin{proof}
Deferred to Appendix~\ref{app:proof_thm_diff_deriv_growth}.
\end{proof}

\subsection{Putting everything together}

\begin{theorem}[Diffusion-ISRQMC RMSE rate]\label{thm:diffusion_final}
Assume:
(i) $f$ satisfies Assumption~\ref{assum:f};
(ii) $\pi$ satisfies Assumption~\ref{assum:pi};
(iii) the diffusion vector field satisfies Assumption~\ref{assum:v_bounded}.
Then the diffusion transport $\bdtau^{0:N}$ satisfies Assumption~\ref{assum:tau}, and the IS integrand $h$ satisfies the boundary growth condition (Assumption~\ref{assump:growth}). Consequently, the scrambled-net IS-RQMC estimator achieves RMSE $O(N^{-1+\varepsilon})$ for any $\varepsilon>0$.
\end{theorem}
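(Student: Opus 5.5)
The plan is to reduce Theorem~\ref{thm:diffusion_final} to Theorem~\ref{thm:main_convergence}. We fix the univariate CDF in Assumptions~\ref{assum:f} and~\ref{assum:tau} to be $F=\Phi$, so that $\rho(x)=\min(\Phi(x),1-\Phi(x))$. It then suffices to check that $\bdtau^{0:N}$ satisfies the three parts \eqref{eq:tau_deriv_growth}, \eqref{eq:tau_value_growth}, \eqref{eq:tau_log_lower} of Assumption~\ref{assum:tau}. After that, Propositions~\ref{prop:fcirc_tau}, \ref{prop:picirc_tau}, \ref{prop:det_growth} and Theorem~\ref{thm:main_convergence} give the boundary growth condition, and Theorem~\ref{thm:owen_rate} gives the RMSE.

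The derivative condition \eqref{eq:tau_deriv_growth} is exactly Theorem~\ref{thm:diff_deriv_growth}, which we take as given. The value condition \eqref{eq:tau_value_growth} with $F=\Phi$ is the first claim of Theorem~\ref{thm:diff_value_growth}. I would quote it with exponent $B_0$ (or $1+B_0$; Assumption~\ref{assum:tau} only asks for some $B_0>0$).

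The real work is \eqref{eq:tau_log_lower}. Theorem~\ref{thm:diff_value_growth} only produces some $C_5>0$, while Assumption~\ref{assum:tau} demands $C_5>1/\alpha$. Without this, the $+1$ exponent from $\pi\circ\bdtau$ in Proposition~\ref{prop:picirc_tau} would fail to cancel the $-1$ from $\abs{\det J_{\bdtau}}$. I would sharpen the constant in two steps.

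\emph{Step 1 (sharp quantile asymptotics).} From the Mills-ratio asymptotics, $\Phi^{-1}(u)^2=2\abs{\ln u}-\ln\abs{\ln u}+O(1)$ as $u\to0$. Hence for every $\delta>0$ there is $C_\delta$ with $\Phi^{-1}(u)^2\ge(2-\delta)\abs{\ln u}-C_\delta$ on $(0,1/2]$, and symmetrically near $1$.

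\emph{Step 2 (absorb the shift).} Use the bounded shift \eqref{eq:bounded_shift}, $\abs{\tau^{0:N}(\bfu)_k-\Phi^{-1}(u_k)}\le TM$, together with the elementary inequality $(a-b)^2\ge(1-\delta)a^2-(\delta^{-1}-1)b^2$. This gives $\abs{\tau^{0:N}(\bfu)_k}^2\ge(1-\delta)(2-\delta)\abs{\ln\min(u_k,1-u_k)}-C'_\delta$.

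So $C_5$ may be taken as any number below $2$. Consequently $C_5>1/\alpha$ is achievable whenever the decay rate of $\pi$ satisfies $\alpha>1/2$. This is the main obstacle: Assumption~\ref{assum:pi} as stated allows any $\alpha>0$. I would therefore either make explicit that hypothesis (ii) is used with $\alpha>1/2$ (decay strictly faster than the standard Gaussian prior), or rescale the base map to $G_j(u)=\sigma\Phi^{-1}(u)$. The rescaled map multiplies the attainable $C_5$ by $\sigma^2$, which can be chosen above $1/(2\alpha)$. Its derivative bounds in Lemma~\ref{lem:probit_derivs} and the bounded-shift argument are unaffected, so Theorem~\ref{thm:diff_deriv_growth} still applies.

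With Assumption~\ref{assum:tau} verified, Assumption~\ref{assum:f} (with $F=\Phi$) and Assumption~\ref{assum:pi} are the hypotheses (i) and (ii). Theorem~\ref{thm:main_convergence} then yields that $h$ satisfies Assumption~\ref{assump:growth}. Finally, Theorem~\ref{thm:owen_rate} gives the $O(N^{-1+\varepsilon})$ RMSE for the scrambled-net estimator $\widehat\mu_N$.
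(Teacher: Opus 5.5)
Your reduction is the same as the paper's: Theorems~\ref{thm:diff_value_growth} and~\ref{thm:diff_deriv_growth} give Assumption~\ref{assum:tau}, and then Theorems~\ref{thm:main_convergence} and~\ref{thm:owen_rate} finish. Reading hypothesis (i) with $F=\Phi$ is also exactly what the paper does tacitly. You differ at the step you single out, and there you are right and the paper's proof is not. The paper says the value-growth part ``follows from Theorem~\ref{thm:diff_value_growth}'', but that theorem only supplies some $C_5>0$, whereas \eqref{eq:tau_log_lower} requires $C_5>1/\alpha$. Your sharpening is correct: the expansion $\Phi^{-1}(u)^2=2\abs{\ln u}-\ln\abs{\ln u}+O(1)$ and the inequality $(a-b)^2\ge(1-\delta)a^2-(\delta^{-1}-1)b^2$ both check out, so every $C_5<2$ is attainable.

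The restriction $\alpha>1/2$ is not an artifact of your argument; it is necessary. By \eqref{eq:bounded_shift} and $\Phi^{-1}(u)^2\le 2\abs{\ln u}$ on $(0,1/2]$, one has $\abs{\tau^{0:N}(\bfu)_k}^2\le 2\abs{\ln\min(u_k,1-u_k)}+O\bigl(\sqrt{\abs{\ln\min(u_k,1-u_k)}}\bigr)$. So $C_5\le 2$ is forced, and \eqref{eq:tau_log_lower} cannot hold when $\alpha\le 1/2$.

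The theorem's final conclusion then genuinely fails. Take $\bdv_\theta\equiv 0$, $f\equiv 1$ and $\pi=\mathcal{N}(0,2I)$, which satisfies Assumption~\ref{assum:pi} for any $\alpha<1/4$. Then $h(\bfu)=\prod_{k}2^{-1/2}\exp\bigl(\Phi^{-1}(u_k)^2/4\bigr)$, and $\int_0^1 \exp\bigl(\Phi^{-1}(u)^2/2\bigr)\,\diff u=\int_{\R}(2\pi)^{-1/2}\,\diff z=\infty$. Hence $h\notin L^2$, and since $h>0$ the RMSE of $\widehat\mu_N$ is infinite.

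So adding $\alpha>1/2$ to hypothesis (ii) is the right repair of Theorem~\ref{thm:diffusion_final} as written. Your alternative, the rescaled base map $G_j=\sigma\Phi^{-1}$, is also sound. The estimator stays unbiased by importance sampling, the derivative induction is unchanged, and the exponent $B_0$ in \eqref{eq:tau_value_growth} just becomes anything above $\sigma^2$, which Assumption~\ref{assum:tau} permits. However, it proves a statement about a different transport, not about $\bdtau^{0:N}=\widetilde{\bdtau}^{0:N}\circ G$ with $G=\Phi^{-1}$.
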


\begin{proof}
The value growth part follows from Theorem~\ref{thm:diff_value_growth}. The derivative growth part follows from Theorem~\ref{thm:diff_deriv_growth}. Thus Assumption~\ref{assum:tau} holds for $\bdtau^{0:N}$. Then Theorem~\ref{thm:main_convergence} implies $h$ satisfies boundary growth and Theorem~\ref{thm:owen_rate} gives the RMSE rate.
\end{proof}

\begin{remark}
Omitting importance sampling gives the transport estimator $\widehat\mu_N^{\mathrm{tr}}:=\frac{1}{N}\sum_{i=1}^N (f\circ\bdtau^{0:N})(\bfu_i)$, whose RMSE still decays as $O(N^{-1+\varepsilon})$ for any $\varepsilon>0$ by Proposition~\ref{prop:fcirc_tau} and Theorem~\ref{thm:owen_rate}.
However, $\widehat\mu_N^{\mathrm{tr}}$ is generally biased for $\E_{\bfx\sim\pi}[f(\bfx)]$ unless the pushforward of the uniform measure under $\bdtau^{0:N}$ equals $\pi$.
\end{remark}

\section{Experiments}
\label{sec:numerics}

\subsection{GMM and MNIST}

We evaluate four estimators on a two-dimensional, two-component Gaussian mixture model (GMM): diffusion transport with either IID Monte Carlo (MC) or scrambled Sobol' RQMC points, with and without importance sampling (IS). The target is
$0.5\mathcal{N}(3\mathbf{e},I)+0.5\mathcal{N}(-3\mathbf{e},I)$,
where $\mathbf{e}=(1,1)^\top$, and the exact value of
$\mathbb{E}[\|X\|^2]$ is 20. The score network is an MLP with four residual blocks and hidden width 256, trained for 1,000 epochs; the probability-flow ODE uses 2,000 Euler steps. We report RMSE over 20 independent randomizations.

Figure~\ref{fig:convergence}a shows that IS removes the error floor caused by approximate transport. Diffusion-ISRQMC attains a log--log slope of approximately $-0.95$ and RMSE $0.002$ at $N=8192$, whereas Diffusion-ISMC has slope approximately $-0.48$ and RMSE approximately $0.08$. Thus, the unbiased RQMC estimator exhibits the rate predicted by our analysis and improves the RMSE by about $40\times$ at the largest sample size.

We also test the transport-only estimators on MNIST \cite{lecun1998gradient} in 784 dimensions using a residual U-Net score model, trained for 10,000 epochs, again with 2,000 Euler steps. Figure~\ref{fig:convergence}b reports the standard deviation of the pixel-mean statistic over 20 randomizations. Diffusion-RQMC follows a near-$N^{-1}$ trend and consistently improves over Diffusion-MC, which follows the standard $N^{-1/2}$ trend. We omit IS here because its weights collapse in this high-dimensional setting; this experiment isolates variance reduction by RQMC rather than bias correction. Additional implementation details, generated samples, and the full derivative-growth proof appear in the appendices.

\begin{figure*}[t]
    \centering
    \begin{minipage}[t]{0.48\textwidth}
        \centering
        \includegraphics[width=\linewidth]{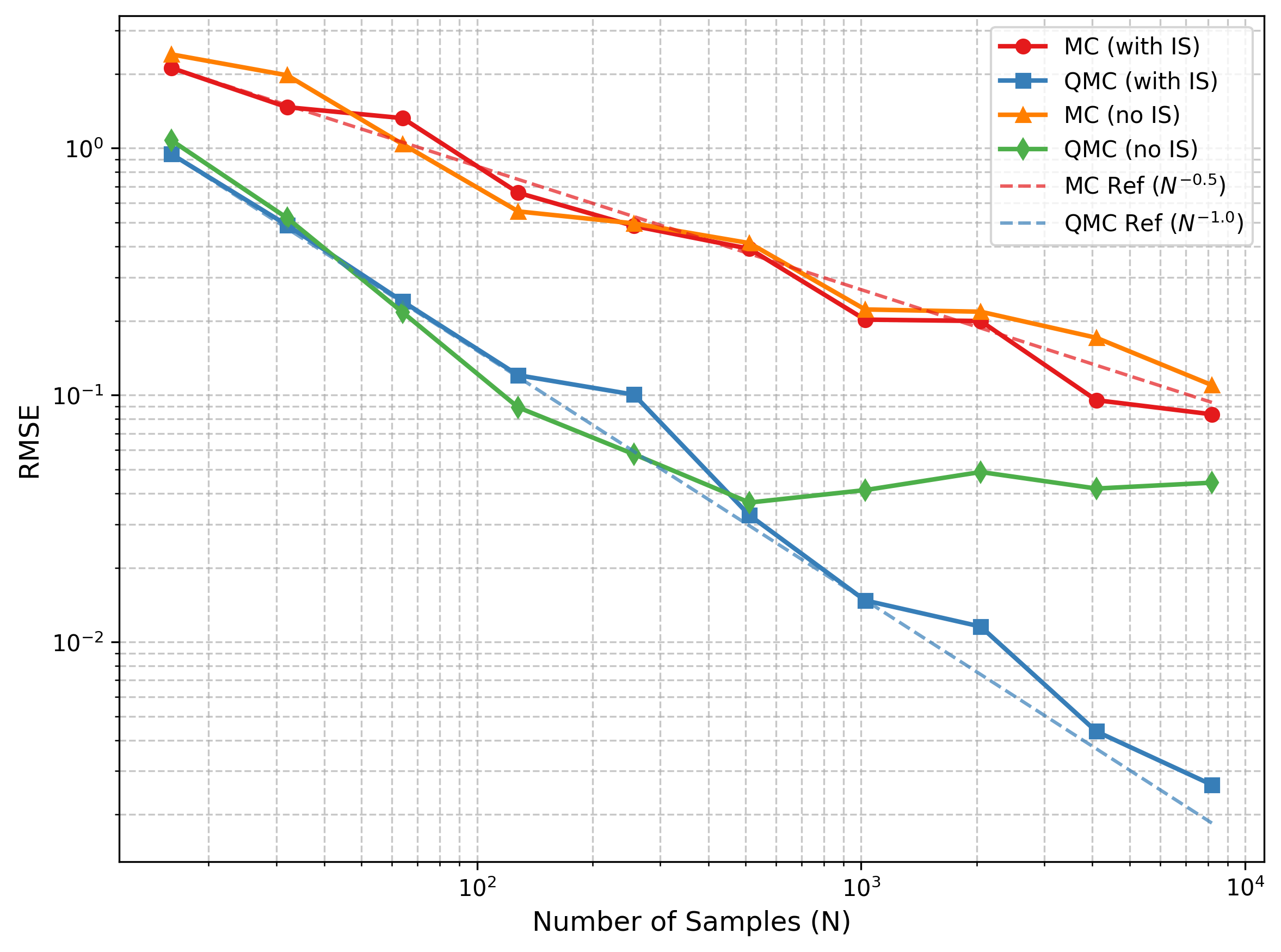}\\
        (a) Two-dimensional GMM: RMSE
    \end{minipage}
    \hfill
    \begin{minipage}[t]{0.48\textwidth}
        \centering
        \includegraphics[width=\linewidth]{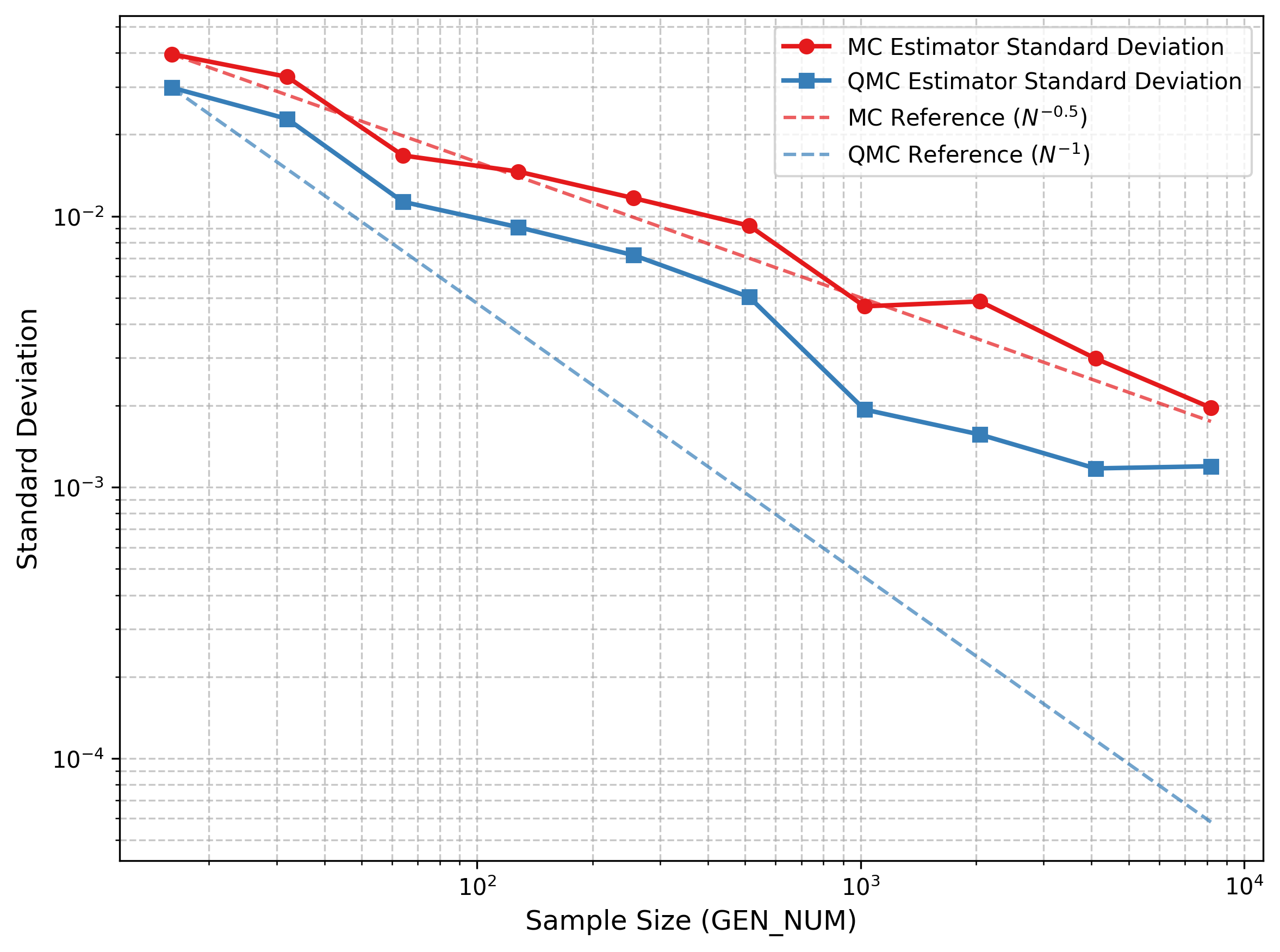}\\
        (b) MNIST: standard deviation
    \end{minipage}
    \caption{Convergence of diffusion-based MC and RQMC estimators. (a) RMSE on the two-dimensional GMM, where importance sampling removes transport bias and Diffusion-ISRQMC approaches the predicted $N^{-1}$ rate. (b) Randomization standard deviation of the MNIST pixel-mean functional, where transport-only RQMC consistently improves over IID MC.}
    \label{fig:convergence}
\end{figure*}

\begin{figure*}[t]
    \centering
    \includegraphics[width=0.94\textwidth]{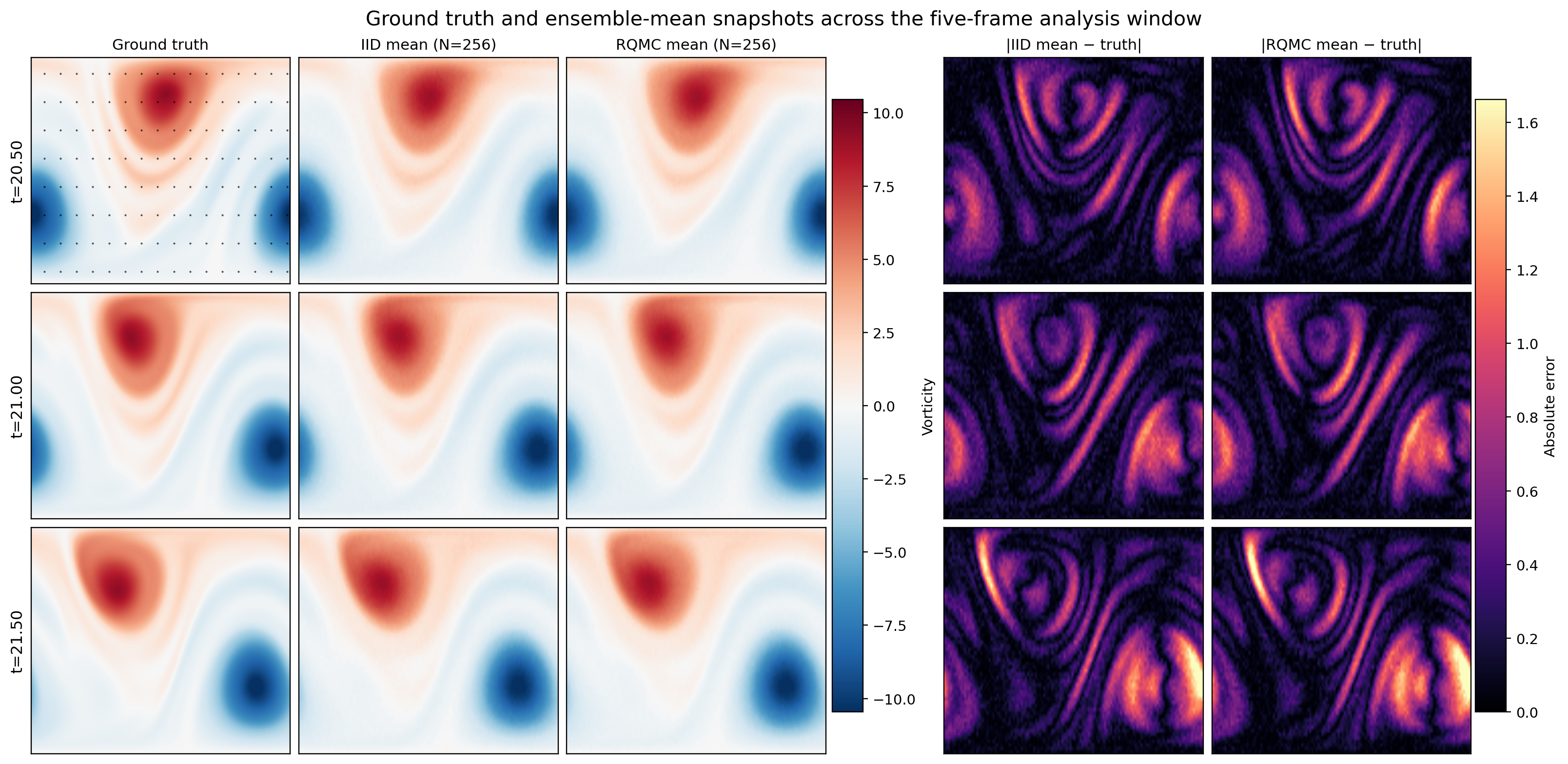}
    \caption{Ground truth, $N=256$ ensemble means, and absolute errors at three forecast leads for a representative held-out vorticity-assimilation context. Dots mark the 128 sparse sensors. Both methods recover the evolving jet structure; the single displayed scramble is qualitative and is not expected to reduce every pointwise truth error.}
    \label{fig:weather_snapshots}
\end{figure*}

\begin{figure*}[t]
    \centering
    \includegraphics[width=0.86\textwidth]{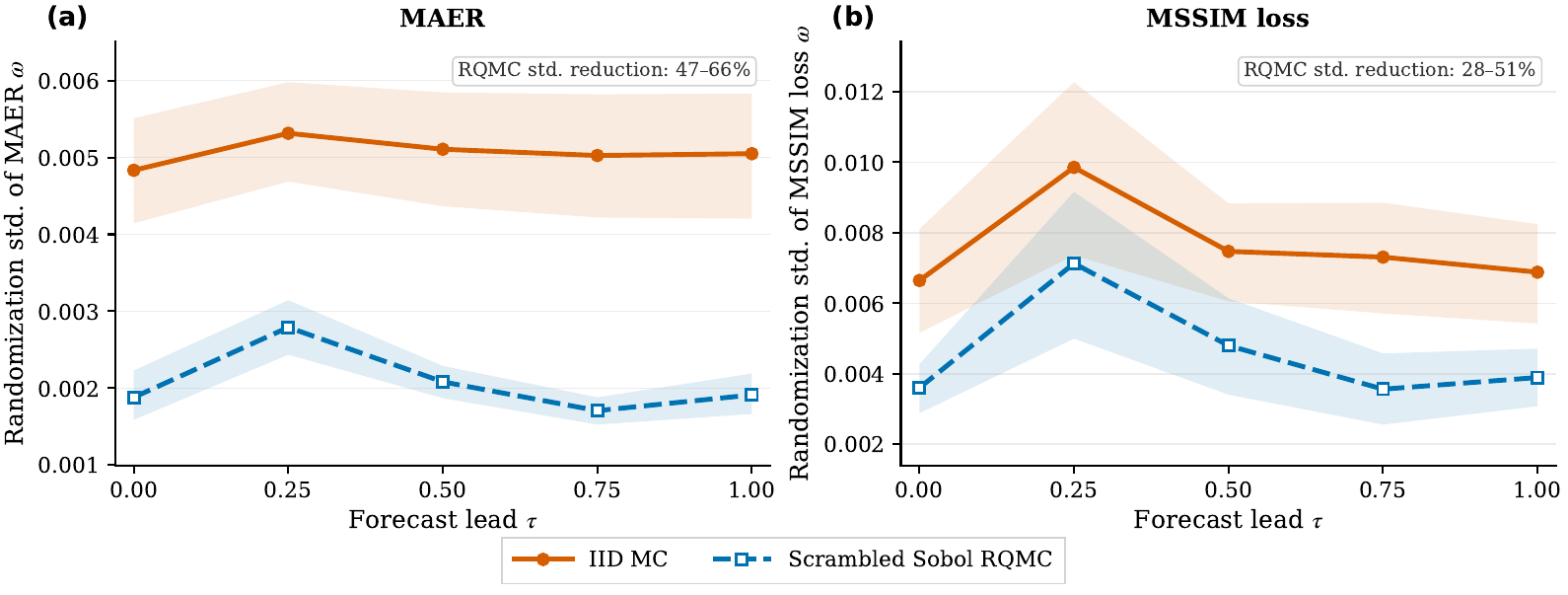}
    \caption{Root-mean randomization standard deviation (RSD) for MAER and MSSIM loss at $N=256$ across the five-frame forecast window. Curves aggregate 20 held-out contexts and 20 independent randomizations; bands are 95\% context-cluster bootstrap intervals. Lower is better.}
    \label{fig:weather_stability}
\end{figure*}

\subsection{Conditional Vorticity Data Assimilation}

We next consider a scientific conditional-generation task adapted from DiffSRDA \cite{ayapilla2026uncertainty}. A conditional diffusion model reconstructs five high-resolution vorticity frames of a synthetic barotropic jet from three low-resolution frames and two sparse-observation frames. Removing the duplicated periodic endpoint gives an output state of size $5\times128\times64=40{,}960$. We train a 128.8M-parameter conditional U-Net and evaluate its best validation checkpoint using deterministic DDIM sampling with five reverse steps and $\eta=0$.

Because 40,960 exceeds the supported Sobol' dimension of 21,201, we split the initial Gaussian latent into two contiguous blocks of dimensions 21,201 and 19,759 and scramble the blocks independently. We therefore refer to this construction as \emph{blocked-Sobol' RQMC}, rather than a single 40,960-dimensional net. Our formal stability evaluation uses 20 held-out contexts, 20 independent randomizations, and $N=256$ samples per ensemble.

At $N=256$, Figure~\ref{fig:weather_stability} shows that blocked-Sobol' RQMC reduces randomization standard deviation by 47--66\% for mean absolute error ratio (MAER) and 28--51\% for MSSIM loss across the five forecast leads; Appendix~\ref{app:supp_weather} reports tabulated reductions and the negative discrete-Laplacian result (VRF 0.89--0.99). With one cached noise bank per replicate reused across contexts and inverse-CDF evaluation on the GPU, online wall time is 2.306\,s for both IID and RQMC; including amortized noise generation gives 2.309\,s and 2.382\,s, respectively, on one NVIDIA H20 (3.18\% overhead). Figure~\ref{fig:weather_snapshots} visualizes context 19. Both methods recover the evolving jet structure; the single displayed scramble is qualitative and is not expected to reduce every pointwise truth error.

\section{Conclusions}
\label{sec:conclusions}

This paper establishes a rigorous link between diffusion probability-flow transport and high-order randomized quasi-Monte Carlo integration. Within an importance sampling formulation, we provided general sufficient conditions on a transport map that guarantee the induced cube integrand satisfies Owen's boundary growth condition, yielding an $O(N^{-1+\varepsilon})$ RMSE for scrambled nets. We then verified these conditions for diffusion transports built from a Gaussian base map (the inverse Gaussian CDF, $\Phi^{-1}$) and an Euler-discretized probability flow ODE, under bounded-derivative assumptions on the learned vector field. Across GMM, MNIST, and conditional vorticity experiments, RQMC reduces integration error or randomization sensitivity. In the 40,960D conditional task, blocked-Sobol' substantially stabilizes MAER and MSSIM at near-identical online cost, while providing no benefit for the derivative-sensitive discrete-Laplacian metric.

Several directions remain open: extending the proof to adaptive ODE solvers and higher-order integrators; relaxing the boundedness assumptions to allow mild time singularities; and tracking constants explicitly in dimension and step count. These questions are important for bridging theory and practical diffusion samplers used in modern scientific computing.
 
\appendix

\input{appendix}

\bibliography{references_arxiv}

\end{document}

%% file: appendix.tex
\section{Additional Experimental Results}
\label{app:additional_results}

To evaluate Diffusion-ISRQMC and its transport-only counterpart across qualitatively different regimes, we consider three settings: a 2D two-component Gaussian mixture model (GMM), MNIST in 784 dimensions \cite{lecun1998gradient}, and conditional super-resolution data assimilation for a synthetic vorticity field in 40,960 dimensions. The GMM experiment isolates importance-sampling correction, MNIST tests an unconditional image model, and the vorticity experiment tests randomization stability and cost in a large conditional scientific model. For GMM and MNIST, we use the following estimators:
\begin{itemize}
    \item Diffusion-MC: Uses independent and identically distributed (IID) uniform sampling on $[0,1]^d$ with diffusion transport, suffering from transport approximation bias.
    \item Diffusion-RQMC: Adopts scrambled Sobol net sampling with diffusion transport, reducing sampling variance but retaining transport bias.
    \item Diffusion-ISMC: Combines IID sampling, diffusion transport, and IS, achieving unbiased estimation with standard MC convergence rate $O(N^{-1/2})$.
    \item Diffusion-ISRQMC: Our proposed method, integrating scrambled Sobol net sampling, diffusion transport, and IS, which is unbiased and theoretically achieves $O(N^{-1+\varepsilon})$ convergence.
\end{itemize}

For vorticity assimilation we compare transport-only IID MC with independently scrambled blocked-Sobol' RQMC. Importance weights are unavailable for this conditional implicit sampler, so this experiment evaluates randomization sensitivity and cost rather than bias correction.

\subsection{Experimental Protocol}
We consider three target distributions: (1) the 2D GMM $0.5\mathcal{N}(3\mathbf{e},I)+0.5\mathcal{N}(-3\mathbf{e},I)$, where $\mathbf{e}=(1,1)^\top$ and $\mathbb{E}[\|X\|^2]=20$; (2) MNIST, flattened to 784 dimensions, with pixel-mean functional $f(x)=784^{-1}\sum_{i=1}^{784}x_i$; and (3) a conditional distribution over five high-resolution vorticity frames given low-resolution dynamics and sparse noisy observations. We use RMSE over 20 randomizations for the GMM, randomization standard deviation for MNIST, and truth-based field metrics for vorticity assimilation. All reported RQMC results use independent Owen-style scrambling between randomizations.

\subsection{2D 2-Component GMM Experiment}
We use a ScoreNetMLP with 4 residual blocks, a hidden dimension of 256, and sinusoidal time embedding, trained for 1000 epochs with batch size 2048. The probability flow ODE is solved with 2000 Euler steps.
Quantitative results from Fig.~\ref{fig:convergence}a reveal two critical insights: First, importance sampling (IS) effectively mitigates transport bias: IS-augmented methods (Diffusion-ISRQMC, Diffusion-ISMC) exhibit sustained convergence, whereas non-IS counterparts (Diffusion-RQMC, Diffusion-MC) suffer from early saturation due to uncorrected bias. Second, RQMC sampling outperforms MC sampling universally: Across both IS and non-IS settings, RQMC delivers faster convergence--Diffusion-ISRQMC achieves a near-$\mathcal O(N^{-1+\varepsilon})$ rate ($\text{log-log slope}\approx -0.95$) with $\text{RMSE} = 0.002$ at $N=8192$, while Diffusion-ISMC follows the standard $\mathcal O(N^{-1/2})$ MC rate ($\text{slope}\approx -0.48$) with $\text{RMSE}\approx 0.08$ at the same sample size. These results confirm that the combination of IS and RQMC yields the most efficient convergence, with Diffusion-ISRQMC demonstrating the lowest RMSE and most stable performance.

\subsection{MNIST Dataset Experiment}
For MNIST (1-channel $28\times28$ images), we employ a ScoreNetUNet, a residual-based U-Net with sinusoidal time embedding and no attention modules, trained for 10,000 epochs with batch size 1024. The ODE is discretized with 2000 Euler steps.
We restrict our comparison to Diffusion-MC and Diffusion-RQMC. We exclude IS-based estimators because the curse of dimensionality leads to exponential weight variance growth and eventual weight collapse (zero ESS), rendering bias correction numerically unstable. Consequently, we focus on standard deviation to demonstrate RQMC's inherent variance reduction in the transport process.

From Fig.~\ref{fig:convergence}b, Diffusion-RQMC achieves a convergence rate near $\mathcal O(N^{-1+\varepsilon})$ and the lowest variance across all sample sizes. Diffusion-MC follows $\mathcal O(N^{-1/2})$ and maintains higher variance. RQMC thus delivers consistent variance reduction even when bias correction is unsuccessful.

To verify the validity of our sampling pipeline, we visualize generated MNIST samples in Fig.~\ref{fig:mnist_samples}. Both Diffusion-MC and Diffusion-RQMC produce reasonable handwritten digit samples that align with the MNIST data distribution. We do not emphasize visual quality differences between the two methods, as the advantage of RQMC over MC lies primarily in convergence rate and variance reduction of target function rather than sample visual fidelity.

\begin{figure}[!t]
    \centering
    \includegraphics[width=0.94\linewidth]{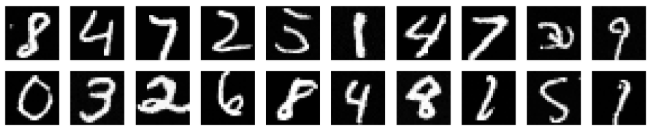}\\
    {\small (a) Diffusion-MC}\par
    \includegraphics[width=0.94\linewidth]{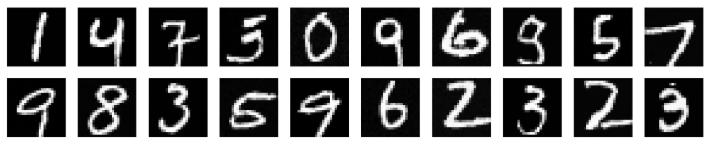}\\
    {\small (b) Diffusion-RQMC}
    \caption{Generated MNIST samples from Diffusion-MC and Diffusion-RQMC. Each panel displays two rows of handwritten digits.}
    \label{fig:mnist_samples}
\end{figure}

\subsection{Conditional Vorticity Data Assimilation}
\label{app:supp_weather}

\paragraph{Synthetic data and conditioning.}
Following the DiffSRDA setting \cite{ayapilla2026uncertainty}, we generate two-dimensional barotropic-jet trajectories with paired low- and high-resolution solvers. The full generation contains 5,000 trajectories and 445,000 aligned nine-frame windows. The low-resolution solver outputs $32\times17$ fields and the high-resolution solver outputs $128\times65$ fields; we discard the duplicated periodic endpoint, yielding learned high-resolution fields of size $128\times64$. Each example targets five high-resolution frames. Its condition contains three low-resolution frames separated by four solver-output intervals, bicubically upsampled to the target grid, and two sparse-observation frames from the first half of the window. Observations are placed every eighth grid point (128 sensors per frame) with additive Gaussian noise of standard deviation 0.1. We use 120,000 windows for training and 30,000 for validation.

\paragraph{Diffusion model and sampler.}
The conditional U-Net has 128,808,901 parameters, five state channels, base width 64, channel multipliers $(1,2,4,8,8)$, three residual blocks per level, and attention at resolution 8. It is trained with an $L_1$ diffusion objective and 1,000 training diffusion steps. We select epoch 266 by validation loss (training loss 0.01644; validation loss 0.03446). Evaluation uses deterministic DDIM with five reverse steps and $\eta=0$, so all sampling randomness is in the initial latent $z\in\mathbb{R}^{40{,}960}$.

PyTorch's Sobol implementation supports at most 21,201 dimensions. We therefore partition $z$ into contiguous blocks of dimensions 21,201 and 19,759, generate a separately scrambled Sobol net for each block, apply the Gaussian inverse CDF on the GPU, and concatenate the results. This independently scrambled \emph{blocked-Sobol'} construction is not a single 40,960D digital net; block interactions are not stratified. A fixed latent bank is generated once per randomization and reused over all contexts. IID MC uses the same pre-generation and caching protocol.

\paragraph{Evaluation and metrics.}
We evaluate 20 held-out conditioning contexts and 20 independent randomizations at ensemble size $N=256$.

Let $\bar\omega_{c,r,N,t}$ be an ensemble-mean frame and $\omega^\star_{c,t}$ its simulated truth. We report
\[
\operatorname{MAER}_{c,r,N,t}=
\frac{\operatorname{mean}_{x,y}|\bar\omega_{c,r,N,t}-\omega^\star_{c,t}|}
{\operatorname{mean}_{x,y}|\omega^\star_{c,t}|},
\]
the RMSE of an unscaled discrete Laplacian (periodic in the first spatial axis and replicated at the walls of the second), and MSSIM loss $1-\operatorname{SSIM}$. SSIM uses an $11\times11$ Gaussian window with standard deviation 1.5 and a per-truth-frame dynamic range. For a truth-based metric $M$, the stability curves report
\[
\operatorname{RSD}_t=
\left[\frac{1}{20}\sum_{c=1}^{20}
\operatorname{Var}_{r}\!\left(M_{c,r,256,t}\right)\right]^{1/2},
\]
with 95\% intervals obtained by bootstrapping contexts as clusters. Lower RSD means less sensitivity to the random seed or scramble.

\paragraph{Quantitative results.}

For truth-based nonlinear metrics, Table~\ref{tab:weather_stability} summarizes the reductions plotted in the main paper. MAER RSD falls by 47--66\% and MSSIM-loss RSD by 28--51\% at every lead. In contrast, the discrete-Laplacian RMSE has VRF 0.89--0.99 and therefore shows no benefit. This negative result helps localize the gain: the blocked construction stabilizes large-scale field and structural metrics, but not the most derivative-sensitive diagnostic.

\begin{table}[t]
\centering
\small
\setlength{\tabcolsep}{4.5pt}
\begin{tabular}{lrrrrr}
\toprule
Forecast lead $\tau$ & 0 & 0.25 & 0.50 & 0.75 & 1.00 \\
\midrule
MAER RSD reduction (\%) & 61.1 & 47.5 & 59.2 & 66.0 & 62.1 \\
MSSIM RSD reduction (\%) & 45.8 & 27.7 & 35.8 & 51.3 & 43.4 \\
\bottomrule
\end{tabular}
\caption{Reduction in randomization standard deviation from IID MC to blocked-Sobol' RQMC at $N=256$; larger is better.}
\label{tab:weather_stability}
\end{table}

\begin{table}[t]
\centering
\small
\setlength{\tabcolsep}{3pt}
\begin{tabular}{lrrr}
\toprule
Timing scope & IID (s) & RQMC (s) & Overhead \\
\midrule
Denoiser only & 2.2688 & 2.2687 & $0.00\%$ \\
Online, cached noise & 2.3058 & 2.3062 & $0.018\%$ \\
Full, amortized & 2.3088 & 2.3823 & $3.18\%$ \\
\bottomrule
\end{tabular}
\caption{Per-context wall time at $N=256$ on one NVIDIA H20. RQMC Gaussian noise is generated once per replicate and reused across contexts; IID uses the same timing boundary.}
\label{tab:weather_timing}
\end{table}

\paragraph{Qualitative fields.}
Figure~\ref{fig:weather_snapshots} shows three forecast leads for context 19, chosen by a deterministic representativeness ranking over the 20 contexts, rather than by visual inspection. Figure~\ref{fig:weather_uncertainty} compares ensemble means, spread, and the across-member standard deviation of absolute error at the first lead. The displayed uncertainty panels use one scramble; consequently, they illustrate that both methods reconstruct coherent jet structures and uncertainty concentration, not that every individual RQMC scramble must be closer to truth.

\begin{figure*}[t]
    \centering
    \includegraphics[width=0.98\textwidth]{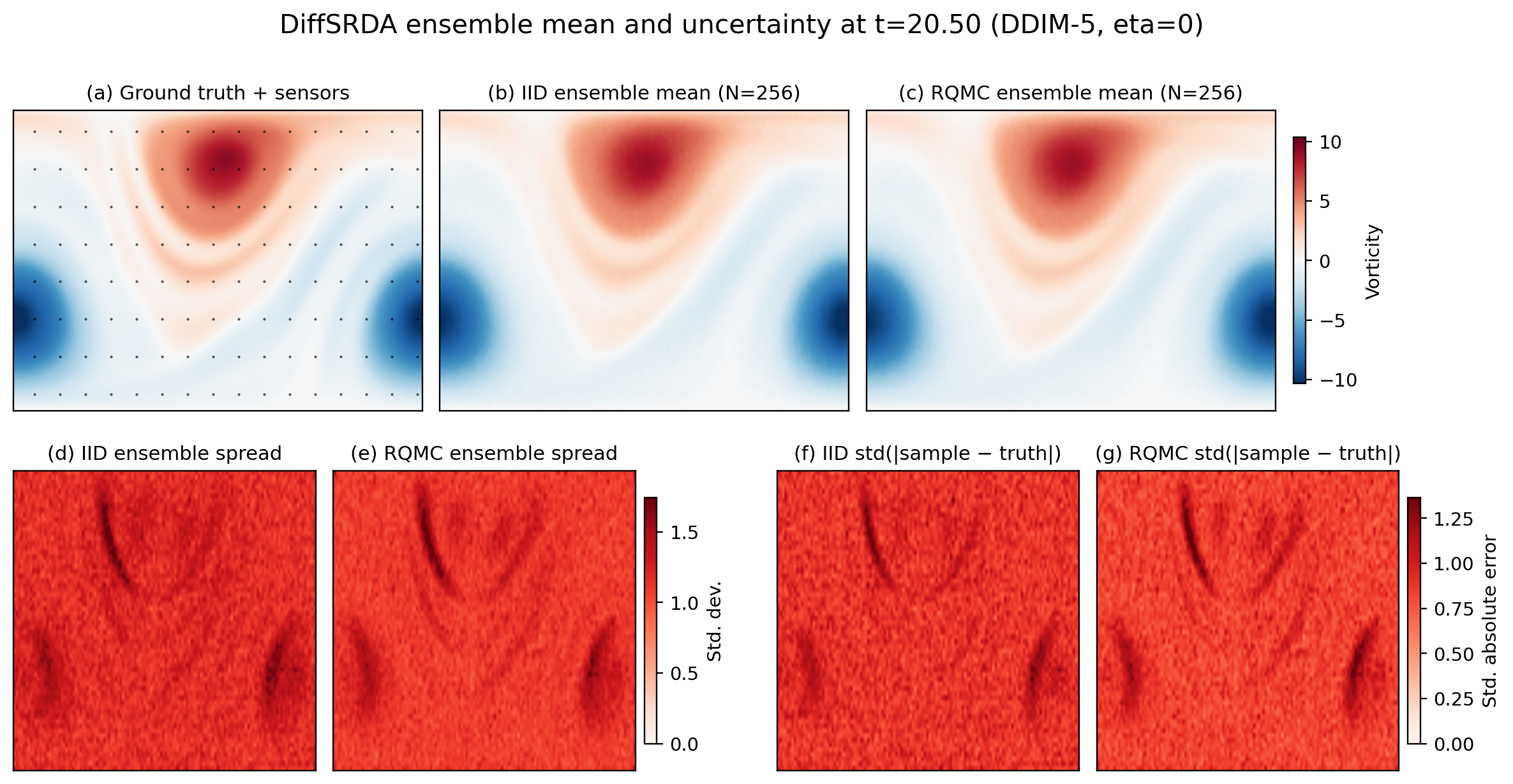}
    \caption{Ensemble mean and uncertainty at the first forecast lead. Top: simulated truth with sensors and the IID/RQMC ensemble means. Bottom: pixelwise ensemble spread and the across-member standard deviation of absolute error. These single-scramble fields are diagnostic visualizations; the aggregate randomization comparison is reported in the main paper and Table~\ref{tab:weather_stability}.}
    \label{fig:weather_uncertainty}
\end{figure*}

\subsection{Experimental Summary}
The experiments expose three complementary regimes. In the 2D GMM, IS removes transport bias and Diffusion-ISRQMC approaches the predicted $\mathcal{O}(N^{-1+\varepsilon})$ behavior, reaching roughly $40\times$ lower RMSE than ISMC at $N=2^{13}$. On MNIST, transport-only RQMC reduces the randomization variance of a pixel-mean functional despite unusable high-dimensional IS weights. In the 40,960D conditional vorticity task, independently scrambled Sobol blocks substantially stabilize MAER and MSSIM at near-identical online denoising cost, but do not improve the derivative-sensitive discrete-Laplacian metric. Thus the practical benefit depends on effective dimension and functional smoothness; it should not be inferred solely from ambient dimension or from the asymptotic theorem.

\section{Proof of the Derivative-Growth Result}
\label{app:proof_thm_diff_deriv_growth}

\begin{proof}
We prove the claim for the partial derivatives of the intermediate maps
\[
	\bdtau^{0:k} := \bdtau^{k-1}\circ\cdots\circ \bdtau^0 \circ G,
\qquad
	k=0,1,\dots,N,
\]
with $\bdtau^{0:0}=G$ and $\bdtau^{0:N}=\bdtau^{0:N}$.
We use induction on $k$.

\paragraph{Base case $k=0$ (only the inverse Gaussian CDF base map).}
Because $G$ is diagonal, $G_j(\bfu)=\Phi^{-1}(u_j)$.
Fix $m,j$ and $v\subseteq 1\!:d$.
Then $\partial^{\{m\}}G_j(\bfu)=0$ if $j\neq m$, and equals $(\Phi^{-1})'(u_m)$ if $j=m$.
Also $\partial^v(\partial^{\{m\}}G_j)$ is nonzero only if $v\subseteq\{m\}$.
By the inverse-Gaussian-CDF derivative bound from the main paper, with derivative orders 1 and 2, we have
\[
	\begin{aligned}
	\abs{(\Phi^{-1})'(u_m)}
	&\le
	C\,\min(u_m,1-u_m)^{-1}\\
	&\quad\cdot\abs{\ln(\min(u_m,1-u_m))}^{-1/2},\\
	\abs{(\Phi^{-1})''(u_m)}
	&\le
	C\,\min(u_m,1-u_m)^{-2}\\
	&\quad\cdot\abs{\ln(\min(u_m,1-u_m))}^{1/2}.
	\end{aligned}
\]
In either case, for any arbitrarily small $B>0$ we can upper bound the logarithmic factor by $\min(u_m,1-u_m)^{-B}$ (enlarge constants if needed). Hence
\[
	\begin{aligned}
		\abs{\partial^v(\partial^{\{m\}}G_j)(\bfu)}
		&\le
		C\, \min(u_m,1-u_m)^{-1}\\
		&\quad\cdot
		\prod_{k=1}^d \min(u_k,1-u_k)^{-B-\Indc{k\in v}},
	\end{aligned}
\]
which is exactly the required derivative-growth bound for $k=0$.

\paragraph{Inductive step.}
Assume $\bdtau^{0:k}$ satisfies the required derivative-growth bound. Consider
\[
	\bdtau^{0:k+1}(\bfu)
	= \bdtau^k\bigl(\bdtau^{0:k}(\bfu)\bigr)
	= \bdtau^{0:k}(\bfu) - h\,\bdv_\theta(\bdtau^{0:k}(\bfu),t_k).
\]
Fix $j\in 1\!:d$. Then
\[
	(\bdtau^{0:k+1})_j
	=
	(\bdtau^{0:k})_j - h\, v_{\theta,j}(\bdtau^{0:k}(\bfu),t_k).
\]
Taking $\partial^{\{m\}}$ and then $\partial^v$ gives
\begin{equation}\label{eq:inductive_split}
	\begin{aligned}
	&\partial^v\partial^{\{m\}}(\bdtau^{0:k+1})_j\\
	=&
	\partial^v\partial^{\{m\}}(\bdtau^{0:k})_j
	-
	h\,\partial^v\partial^{\{m\}}\Bigl(v_{\theta,j}\circ \bdtau^{0:k}\Bigr)(\bfu).
	\end{aligned}
\end{equation}
The first term is controlled by the inductive hypothesis. It remains to bound the second term.

\paragraph{Step 1: first derivative chain rule.}
Differentiate $v_{\theta,j}(\bdtau^{0:k}(\bfu),t_k)$ w.r.t.\ $u_m$:
\[
	\begin{aligned}
	&\partial^{\{m\}}(v_{\theta,j}\circ \bdtau^{0:k})\\
	=&
	\sum_{p=1}^d
	\Bigl(\partial_{x_p} v_{\theta,j}\Bigr)(\bdtau^{0:k}(\bfu),t_k)\cdot
	\partial^{\{m\}}(\bdtau^{0:k})_p(\bfu).
	\end{aligned}
\]

\paragraph{Step 2: apply $\partial^v$ and product rule.}
Applying $\partial^v$ yields a sum over $p$ and partitions $v=v_1 \sqcup  v_2$:
\[
	\begin{aligned}
		\partial^v\partial^{\{m\}}(v_{\theta,j}\circ \bdtau^{0:k})
		&=
		\sum_{p=1}^d \sum_{v_1 \sqcup  v_2=v}
		\partial^{v_1}\Bigl[(\partial_{x_p} v_{\theta,j})\circ \bdtau^{0:k}\Bigr]\\
		&\quad\cdot
		\partial^{v_2}\Bigl[\partial^{\{m\}}(\bdtau^{0:k})_p\Bigr].
	\end{aligned}
\]

\paragraph{Step 3: bound each factor.}
By the bounded-vector-field assumption in the main paper, all spatial derivatives of $v_{\theta,j}$ are bounded. The composition derivative
$\partial^{v_1}[(\partial_{x_p} v_{\theta,j})\circ \bdtau^{0:k}]$
can be expanded via Fa\`a di Bruno; every term is a product of bounded derivatives of $\partial_{x_p}v_{\theta,j}$ with derivatives of $\bdtau^{0:k}$.
Since bounded factors do not change boundary singularity order, the growth is dominated by products of derivatives of $\bdtau^{0:k}$.
Using the inductive hypothesis for $\bdtau^{0:k}$ repeatedly, we obtain for arbitrarily small $B>0$ a bound of the form
\[
	\begin{aligned}
	&\abs{\partial^{v_1}\Bigl[
	(\partial_{x_p} v_{\theta,j})\circ \bdtau^{0:k}\Bigr]}\\
	&\quad\le
	C\prod_{r=1}^d
	\min(u_r,1-u_r)^{-B-\Indc{r\in v_1}}.
	\end{aligned}
\]
More explicitly, every Fa\`a di Bruno term is a finite product of two types of factors: (i) spatial derivatives of $\partial_{x_p}v_{\theta,j}$ evaluated at $\bdtau^{0:k}(\bfu)$, which are uniformly bounded by that assumption, and (ii) derivatives of the inner map $\bdtau^{0:k}(\bfu)$, which satisfy the boundary growth bound by the inductive hypothesis. Therefore each term inherits its boundary singularity entirely from the inner-map derivatives, and summing finitely many such terms yields the stated bound.

For the second factor, the inductive hypothesis yields
\[
	\begin{aligned}
		\abs{\partial^{v_2}[\partial^{\{m\}}(\bdtau^{0:k})_p]}
		&\le
		C\,\min(u_m,1-u_m)^{-1}\\
		&\quad\cdot
		\prod_{r=1}^d \min(u_r,1-u_r)^{-B-\Indc{r\in v_2}}.
	\end{aligned}
\]

\paragraph{Step 4: multiply and recombine exponents.}
Multiplying the two factors and using $v_1\sqcup v_2=v$, we obtain
\[
	\begin{aligned}
		\abs{\partial^v\partial^{\{m\}}(v_{\theta,j}\circ \bdtau^{0:k})}
		&\le
		C\,\min(u_m,1-u_m)^{-1}\\
		&\quad\cdot
		\prod_{r=1}^d \min(u_r,1-u_r)^{-B-\Indc{r\in v}}.
	\end{aligned}
\]
Substituting back into \eqref{eq:inductive_split} (absorbing the factor $h$ into constants), we conclude that $\bdtau^{0:k+1}$ satisfies the same growth bound, completing the induction.
\end{proof}